\newtheorem{theorem}{Theorem}[section]
\newtheorem*{theorem*}{Theorem}
\newtheorem{proposition}[theorem]{Proposition}
\newtheorem{lemma}[theorem]{Lemma}
\newtheorem{corollary}[theorem]{Corollary}
\theoremstyle{definition}
\newtheorem{definition}[theorem]{Definition}
\theoremstyle{remark}
\newtheorem{preex}{Example}
\theoremstyle{remark}
\newtheorem{preremark}{Remark}
\theoremstyle{remark}
\newtheorem{preobs}{Observation}
\newenvironment{remark}{\begin{preremark}}{\qed\end{preremark}}
\numberwithin{equation}{section}
\def\R{\mathbb{R}}                              
\def\C{\mathbb{C}}                              
\def\N{\mathbb{N}}
\def\Aut{\rm Aut}
\begin{document}

\baselineskip=17pt

\title{Holomorphic Lie group actions on Danielewski surfaces}

\author{Frank Kutzschebauch}
\curraddr{Institute of Mathematics\\University of Berne\\
Sidlerstrasse 5, CH-3012 Bern\\ Switzerland}\email{Frank.Kutzschebauch@math.unibe.ch}

\author{Andreas Lind}
\curraddr{Department of Mathematics\\ Mid Sweden University\\
SE-851 70 Sundsvall\\ Sweden} \email{Andreas.Lind@miun.se}
\keywords{Danielewski surfaces, automorphisms, overshears, free amalgamated product, Lie group actions, one parameter subgroups}
\subjclass[2000]{Primary 32M17; Secondary 22E60.}

\begin{abstract}
We prove that any Lie subgroup $G$ (with finitely many connected components) of an infinite-dimensional topological group $\mathcal G$ which is an amalgamated product of two closed subgroups, can be conjugated to one factor.
We apply this result to classify Lie group actions on Danielewski surfaces by elements of the overshear group (up to conjugation). 
\end{abstract}

\maketitle

\section{Introduction}

The motivation of this paper is the  study of holomorphic automorphisms of Danielewski surfaces. 
These are affine algebraic surfaces defined by 
an equation $D_p := \{xy - p(z) = 0\}$ in $\C^3$, where $p \in \C[z]$ is a polynomial with simple zeros. These surfaces are intensively  studied in affine algebraic geometry, their algebraic automorphism group has been determined by Makar-Limanov \cite{MakarLimanov, MakarLimanov2}. More  results on algebraic automorphisms of Danielewski surfaces can be found in \cite{Crachiola}, \cite{Daigle}, \cite{Daigle2}, \cite{Dubouloz}, \cite{DuboulozPoloni}.

From the holomorphic point of view their
study began in the paper of Kaliman and Kutzschebauch \cite{KutzschebauchKaliman} who proved 
they have the density and volume density property, important features of the so called Anders\'en-Lempert theory.  
For definitions and an  overview over Anders\'en-Lempert theory we refer to \cite{ForstnericKutzschebauch}.

Another important study in the borderland between affine algebraic geometry and complex analysis is the classification
of complete algebraic vector fields on Danielewski surfaces by Leuenberger \cite{Leuenberger}. In fact we explain in Remark \ref{Cstarfibration} how to use his results together with our Classification Theorem \ref{action} to find holomorphic automorphisms 
of Danielewski surfaces which are not contained in the overshear group.

 In~\cite{KutzschebauchLind} we defined the notion of an overshear and shear on Danielewski surfaces as follows. 

\begin{definition}
A mapping $O_{f,g} : D_p \to D_p$ of the form
\[
O_{f,g}(x, y, z)  = \left( x, y + \frac{1}{x}\left(p(z e^{x f(x)} + x g(x)) - p(z)\right), z e^{x f(x)} + x g(x) \right)
\]
(or with the role of 1\textsuperscript{st} and 2\textsuperscript{nd} coordinates exchanged, $IO_{f,g}I$) is called an an \emph{overshear map}, where $f, g : \C \to \C$ are holomorphic functions (and the involution $I $ of $ D_p$  is the map interchanging $x$ and $y$). When $f \equiv 0$, we say that $S_g := O_{0,g}$ is a \emph{shear map} on $D_p$.
\end{definition}

These mappings are automorphisms of $D_p$. The maps of the form $O_{f,g}$
form a group, which we call $O_1$. It can be equivalently described as  the subgroup of $\Aut(D_p)$, leaving the function  $x$ invariant. It is therefore a closed subgroup of $\Aut(D_p)$ (endowed with compact open topology).
Analogously the maps  $I O_{f,g} I$
form a group, the closed subgroup of $\Aut(D_p)$ leaving $y$ invariant, which we call $O_2$.

The main result of \cite{KutzschebauchLind} says that the group generated by overshears, i.e. by $O_1$ and $O_2$, (we call it the {\em overshear group} $\text{OS}(D_p)$) is  dense (w.r.t. the compact-open topology) in the component of the identity of the holomorphic automorphism group $\Aut (D_P)$ of $D_p$. This fact generalizes the classical results of  Anders\'en and Lempert, see~\cite{AndersenLempert}, from $\C^n$. It is worth to be mentioned at this point that $D_p$ for $p$ of degree $1$ is isomorphic to $\C^2$.

In \cite{AndristKutzschebauchLind} the authors together with Andrist proved 
the following  structure result of the overshear group.

\begin{theorem}[Theorem $5.1$ in~\cite{AndristKutzschebauchLind}]\label{StructureTheorem}
Let $D_p$ be a Danielewski surface and assume that $\deg(p) \geq 4$, then the overshear group, $\text{OS}(D_p)$, is a free amalgamated product of $O_1$ and $O_2$.
\end{theorem}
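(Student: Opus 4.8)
The plan is to check the defining property of a free amalgamated product directly. Write $C:=O_1\cap O_2$. Since $O_1$ and $O_2$ generate $\text{OS}(D_p)$ by definition, it suffices to prove that no \emph{reduced} word $\gamma_1\gamma_2\cdots\gamma_n$ with $n\ge 1$ and $\gamma_i\in O_1\setminus C$, $\gamma_{i+1}\in O_2\setminus C$ alternately (or the same with the roles of $O_1$ and $O_2$ exchanged) can equal $\id$ on $D_p$; this is exactly the assertion $\text{OS}(D_p)=O_1*_C O_2$. So the proof splits into pinning down $C$ and excluding reduced relations.

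First I would identify $C$. An automorphism fixing both coordinate functions $x$ and $y$ fixes $p(z)=xy$ as well; restricting to the Zariski dense chart $\{x\ne 0\}$, on which $(x,z)$ are coordinates with $y=p(z)/x$, such an automorphism preserves every fibre $\{x=\mathrm{const}\}\cong\C$ and hence acts on $z$ by an affine map $z\mapsto a(x)z+b(x)$ with $p\big(a(x)z+b(x)\big)\equiv p(z)$. As $\deg p\ge 2$ and $p$ has simple zeros, only finitely many affine maps preserve $p$, so $a$ and $b$ are constant; thus $C=\{(x,y,z)\mapsto(x,y,az+b):p(az+b)=p(z)\}$ is a finite group, trivial for generic $p$.

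The core is excluding reduced relations, and for this I would run a ping-pong (Bass--Serre) argument: exhibit an $\text{OS}(D_p)$-set $X$ with two nonempty disjoint subsets $X_1,X_2$ such that $(O_1\setminus C)\cdot X_2\subseteq X_1$ and $(O_2\setminus C)\cdot X_1\subseteq X_2$; a reduced word of length $\ge 1$ then carries a point of $X_2$ (resp.\ $X_1$) into $X_1$ (resp.\ $X_2$) and so is nontrivial. Because an overshear $O_{f,g}$ involves the transcendental factor $e^{xf(x)}$, one cannot take $X$ to be polynomial data of bounded degree; a natural substitute is the set of $\C$-fibrations $D_p\to\C$ (equivalently, classes of complete locally nilpotent vector fields / one-parameter unipotent actions), on which $O_1$ fixes the class of $x$ and $O_2$ the class of $y$, together with a combinatorial ``distance from the fibration $x$'' that every factor from $O_2\setminus C$ strictly increases and every factor from $O_1\setminus C$ strictly decreases, and symmetrically. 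Equivalently one forms the graph with vertex set $\text{OS}(D_p)/O_1\sqcup\text{OS}(D_p)/O_2$ and edge set $\text{OS}(D_p)/C$ and shows it is a tree (no embedded cycle $=$ no reduced relation), then appeals to Bass--Serre theory; the model for the whole scheme is Makar-Limanov's amalgamated-product description of the \emph{algebraic} automorphism group, and one consequence of such a structure is that $O_1$ and $O_2$ are not conjugate inside $\text{OS}(D_p)$ (so the involution $I$ is excluded).

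The main obstacle is the construction and quantitative control of this monotone invariant — one must show the $\C$-fibration structure of $D_p$ is rigid enough that the two rulings $x$ and $y$ play genuinely asymmetric roles within $\text{OS}(D_p)$ — and it is here that $\deg p\ge 4$ is indispensable: for small degree $D_p$ acquires extra symmetries (for $\deg p=1$ it is $\C^2$, whose linear automorphisms destroy the amalgam; the smooth quadric $\deg p=2$ interchanges its two rulings), which would make reduced words collapse. Concretely I would expect the length function to come from orders of vanishing along the boundary divisor of a fixed simple normal crossing completion $\overline{D_p}$: even though a single overshear is not birational on $\overline{D_p}$, one can still track how the orders of $w^\ast x$ and $w^\ast y$ along the boundary components change when an alternating factor is prepended, and the combinatorics of the dual graph of $\overline{D_p}$ — which for $\deg p\ge 4$ forces these orders to grow strictly — provides the desired strictly increasing word length. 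Verifying this growth uniformly over all holomorphic $f,g$, including the genuinely transcendental cases, is the bulk of the work.
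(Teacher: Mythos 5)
This statement is not proved in the present paper at all: it is imported verbatim as Theorem~$5.1$ of~\cite{AndristKutzschebauchLind}, so there is no internal proof to compare your argument against. Judged on its own, your proposal is a strategy outline rather than a proof, and the gap sits exactly at the decisive step. The reduction to ``no reduced word is the identity'' and the ping-pong/Bass--Serre framework are the standard (and correct) skeleton for any such theorem, but the entire content lies in producing the strictly monotone invariant on alternating words, and you do not construct it: you write ``I would expect the length function to come from orders of vanishing along the boundary divisor'' and concede that verifying the growth ``is the bulk of the work.'' Moreover, the specific candidate you propose cannot work as stated. An overshear $O_{f,g}$ contains the factor $e^{xf(x)}$ with $f$ an arbitrary entire function; it does not extend meromorphically to any algebraic completion $\overline{D_p}$, so ``order of vanishing of $w^\ast x$ along a boundary component'' is simply not defined for a word $w$ in overshears. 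You acknowledge that a single overshear is not birational on $\overline{D_p}$ but never resolve this, and it is precisely the obstruction that separates the holomorphic statement from Makar-Limanov's algebraic one (where polynomial degrees are available). Any actual proof must replace degree by an analytic growth invariant controlled uniformly over all entire $f,g$.

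Two smaller points. First, your description of $C=O_1\cap O_2$ is not quite right: an element of $O_1$ acts on $z$ by $z\mapsto ze^{xf(x)}+xg(x)$, and the condition that it also fix $y$ forces this affine map to preserve $p$ for every $x$; since at $x=0$ the map is the identity and the group of affine maps preserving $p$ is finite (hence discrete), continuity forces $e^{xf(x)}\equiv 1$ and $xg(x)\equiv 0$, so $C$ is trivial, not merely finite. Second, the role of $\deg p\ge 4$ is asserted by analogy with low-degree degenerations but never enters your argument at any verifiable point, whereas in the cited proof it is a genuine hypothesis used in the estimates. As it stands the proposal identifies the right architecture but leaves the theorem unproved.
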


The main result of our paper is the following classification result for Lie group actions on Danielewski
surfaces by elements of the overshear group.

\begin{theorem}\label{action}  Let $D_p$ be a Danielewski surface and assume that $\deg(p) \geq 4$. Let a real connected  Lie group $G$ act on $D_p$ by automorphisms in
$\text{OS} (D_p)$. Then $G$ is abelian, isomorphic to the additive group $(\R^n,+)$ and is conjugated (in $\text{OS} (D_p)$) to a subgroup of $O_1$. 
\end{theorem}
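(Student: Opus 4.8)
The plan is to combine the main theorem of this paper (the abstract amalgamation result quoted in the abstract) with an analysis of the finite-dimensional Lie subalgebras of $O_1$.

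First I would apply that main theorem. By Theorem~\ref{StructureTheorem}, $\text{OS}(D_p)$ is the amalgamated product of the two \emph{closed} subgroups $O_1$ and $O_2$ over $O_1\cap O_2$, and it is infinite dimensional since it contains $O_1$; the given action makes $G$ a connected Lie subgroup of $\text{OS}(D_p)$ (at worst after replacing $G$ by $G/\ker$, i.e.\ assuming the action effective). Hence $G$ is conjugate, by an element of $\text{OS}(D_p)$, into $O_1$ or into $O_2$. Since the involution $I$ conjugates $O_2$ onto $O_1$ and normalizes $\text{OS}(D_p)$ (it simply interchanges the two factors), the case $G\subseteq O_2$ reduces to the case $G\subseteq O_1$ after relabelling the coordinates $x\leftrightarrow y$. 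So from now on assume $G\subseteq O_1$.

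Second — and this is the analytic heart — I would show that every finite-dimensional Lie subalgebra of (the Lie algebra of) $O_1$ is abelian. Differentiating the one-parameter subgroups of $O_1$, one checks that they are exactly the maps $t\mapsto O_{tf,g_t}$, and their infinitesimal generators are the vector fields $V_{a,b}$ on $D_p$ determined by $V_{a,b}(x)=0$ and $V_{a,b}(z)=a(x)z+b(x)$ with $a,b\in x\,\mathcal{O}(\C)$ (the $y$-component being forced by tangency to $D_p$). A direct computation gives the bracket
\[
[\,V_{a_1,b_1},\,V_{a_2,b_2}\,]\;=\;V_{0,\ a_2 b_1-a_1 b_2}.
\]
Now suppose such a finite-dimensional Lie subalgebra $\mathfrak h$ is non-abelian; then it contains $V_1=V_{a_1,b_1}$ and $V_2=V_{a_2,b_2}$ with $c:=a_2 b_1-a_1 b_2\neq 0$, so $a_1\neq 0$ or $a_2\neq 0$, say $a_1\neq 0$, and $a_1$ is a \emph{non-constant} entire function because $a_1(0)=0$. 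Iterating the bracket with $V_1$ shows that $\mathfrak h$ contains $V_{0,\,(-a_1)^k c}$ for every $k\geq 0$, and the functions $\{a_1^k c\}_{k\geq 0}$ are linearly independent in $\mathcal{O}(\C)$ (a nonzero entire function annihilated by a nonzero polynomial would have finite image). Hence $\mathfrak h$ is infinite dimensional, a contradiction. Applying this with $\mathfrak h$ the image of $\mathfrak g=\mathrm{Lie}(G)$ under the (injective, since the action is effective) differential of the action — which intertwines the two brackets up to sign — shows that $\mathfrak g$ is abelian, hence the connected group $G$ is abelian.

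Finally, a connected abelian Lie group is isomorphic to $\R^n\times(\R/\Z)^m$; since $O_1$ is torsion free — from the group law, $O_{f,g}^k=\id$ forces $f\equiv 0$ and then $g\equiv 0$ — and $G$ embeds into $O_1$, we get $m=0$, i.e.\ $G\cong(\R^n,+)$. Together with the conjugation produced in the first step, this is exactly the assertion. The main obstacle is the first step, namely the abstract amalgamation theorem itself, which occupies the bulk of the paper; granting it, the remaining work is the bookkeeping needed to pass rigorously from the infinite-dimensional group $O_1$ to the vector fields $V_{a,b}$ and to justify that a finite-dimensional $\mathfrak g$ genuinely lies in their span.
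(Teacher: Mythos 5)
Your proposal is correct and follows essentially the same route as the paper: conjugate $G$ into $O_1$ via the amalgamation theorem, identify one-parameter subgroups of $O_1$ with overshear fields, and use the bracket formula (your $[V_{a_1,b_1},V_{a_2,b_2}]=V_{0,\,a_2b_1-a_1b_2}$ is the paper's $[OF^x_{f,g},OF^x_{h,k}]=x\,SF_{gh-kf}$) to show by iterated brackets that a non-abelian finite-dimensional subalgebra would be infinite-dimensional. The only cosmetic difference is that you rule out torus factors by torsion-freeness of $O_1$, whereas the paper notes that one-parameter subgroups are flows of overshear fields and hence copies of $(\R,+)$ rather than $S^1$.
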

The exact formulas for such actions are described in Corollary \ref{action:formula}.

For the overshear group of $\C^2$ (instead of Danielewski surfaces) many results in the same spirit have been proven by Ahern and Rudin in \cite{AhernRudin} for $G$ a finite cyclic group, by Kutzschebauch and Kraft in~\cite{KutzschebauchKraft} for compact $G$, for one-parameter subgroups in the thesis of Anders\'en  \cite{Andersen2},  by de Fabritiis in \cite{Fabritiis3}, by Ahern, Forstneri\v c and Varolin \cite{AhernForstneric}, \cite{AhernForstnericVarolin}.  For Danielewski surfaces our result is the first of that kind. The proof relies on our second main result, which seems to be of independent interest.

\begin{theorem}\label{Lie}
Let $\mathcal G$ be a topological group which is a free amalgamated product  $O*_{O \cap L} L$ of two  closed subgroups $O, L$. Furthermore let $G$ be a Lie group with finitely many connected components and $\varphi \colon G \to \mathcal G$ be a continuous group homomorphisms. Then $\varphi(G)$ is conjugate to a subgroup of $O$ or $L$.
\end{theorem}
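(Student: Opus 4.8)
The plan is to run everything through Bass--Serre theory. Since $\mathcal G = O *_{O\cap L} L$, it acts without inversions on its Bass--Serre tree $T$, whose vertices are the cosets in $\mathcal G/O$ and $\mathcal G/L$ and whose edges are the cosets in $\mathcal G/(O\cap L)$; the stabilizer of a vertex is a conjugate of $O$ or of $L$, hence a \emph{closed} subgroup of $\mathcal G$ by hypothesis, and a subgroup of $\mathcal G$ is conjugate into $O$ or into $L$ precisely when it fixes a vertex of $T$. Thus the whole problem becomes: \emph{show $\varphi(G)$ fixes a vertex of $T$.} The standard tools I would collect about isometric actions on a tree are: the translation length $\ell$ is $\mathbb N$-valued and satisfies $\ell(w^n)=|n|\,\ell(w)$ (so $w^n$ elliptic $\iff$ $w$ elliptic); a group with a bounded — in particular finite — orbit, and in particular a finite group, fixes a vertex (here there are no inversions); Serre's lemma, that for elliptic $a,b$ one has $\mathrm{Fix}(a)\cap\mathrm{Fix}(b)\ne\emptyset$ unless $ab$ is hyperbolic, so in particular commuting elliptics have a common fixed vertex; the Helly property, that finitely many pairwise intersecting subtrees have a common point; and the dichotomy that a group fixing no vertex and containing no hyperbolic element must fix an end.

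First I would reduce to $G$ connected. If $\varphi(G^0)$ fixes a vertex, then $T':=\mathrm{Fix}(\varphi(G^0))$ is a non-empty subtree, invariant under $\varphi(G)$ because $G^0\triangleleft G$, on which $\varphi(G^0)$ acts trivially; the induced action of $\varphi(G)$ on $T'$ therefore factors through the \emph{finite} group $\varphi(G)/\varphi(G^0)$, which fixes a vertex of $T'$, and that vertex is fixed by $\varphi(G)$. So I may assume $G=G^0$.

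Next, the heart of the argument: a connected $G$ fixes a vertex. I would first treat a one-parameter subgroup. For $X\in\mathrm{Lie}(G)$ and $t\in\mathbb R$, $\varphi(\exp tX)=\varphi(\exp(tX/n))^n$, so $\ell(\varphi(\exp tX))=n\,\ell(\varphi(\exp(tX/n)))$ for all $n$, and $\mathbb N$-valuedness of $\ell$ forces $\ell(\varphi(\exp tX))=0$; thus every element of $\varphi(\exp\mathbb R X)$ is elliptic. Moreover $\varphi(\exp\mathbb R X)$ fixes a vertex: if not, it fixes an end $\xi$ by the dichotomy; picking a ray $[v_0,\xi)$ with vertices $v_0,v_1,\dots$, the sets $F_k=\{\psi\in\varphi(\exp\mathbb R X):\psi$ fixes $[v_k,\xi)$ pointwise$\}$ are closed subgroups (vertex stabilizers are closed), and their union is everything (an elliptic isometry fixing $\xi$ fixes a subray); pulling back, $\mathbb R=\bigcup_k\varphi^{-1}(F_k)$ is a countable union of closed subgroups of the Baire group $\mathbb R$, so some $\varphi^{-1}(F_{k_0})$ has non-empty interior, hence is open, hence is all of $\mathbb R$, and $\varphi(\exp\mathbb R X)$ fixes $v_{k_0}$ — a contradiction. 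The same Baire/dichotomy argument, applied to the \emph{domain}, shows that any continuous homomorphism from a Lie group into $\mathcal G$ whose image consists of elliptic elements has image fixing a vertex; and, applied to a compact subgroup $K\le\mathcal G$ (which has no hyperbolic element, since $\overline{\langle h\rangle}$ compact forbids $h$ translating along an axis), shows $K$ fixes a vertex. I then want to upgrade ellipticity of one-parameter subgroups to: every element of $\varphi(G)$ is elliptic. Writing $\mathfrak g=\mathrm{Lie}(G)$ with generators $X_1,\dots,X_d$, the group $\varphi(G)$ is generated by the one-parameter subgroups $E_i=\varphi(\exp\mathbb R X_i)$, each fixing a vertex; I would induct on $\dim G$, choosing the $X_i$ so that each pair $X_i,X_j$ lies in a proper subalgebra, so that $\langle E_i,E_j\rangle=\varphi(G_{ij})$ is elliptic by the inductive hypothesis, whence the finitely many subtrees $\mathrm{Fix}(E_i)$ pairwise intersect (Serre's lemma) and so meet in a common vertex (Helly), which $\varphi(G)$ fixes. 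Alternatively — the route I would fall back on when this combinatorial choice is awkward — one reduces via the radical to $G$ solvable and $G$ semisimple: for solvable $G$, $\exp$ is onto (as $G$ is a quotient of a simply connected solvable group), so every element lies on a one-parameter subgroup; in general one decomposes a suitable power of each $g\in G$ into commuting ``semisimple'' and ``unipotent'' pieces, disposes of elements with compact cyclic closure by the compact-subgroup case, and combines ``commuting elliptics have a common fixed vertex'' with multiplicativity of $\ell$ to see $\varphi(g)$ is elliptic.

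Finally, once every element of the connected group $\varphi(G)$ is elliptic, the Baire/dichotomy statement above (applied to the connected, in particular $\sigma$-compact, Lie group $G$) shows $\varphi(G)$ fixes a vertex; with the reduction of the first step this gives the theorem for the original $G$ with finitely many components. \textbf{The main obstacle} is the step inside the third paragraph promoting ellipticity from one-parameter subgroups to arbitrary elements of a connected Lie group: a general element of a connected Lie group need not lie on — nor even have a power on — a one-parameter subgroup, so divisibility cannot be invoked directly, and one must feed in genuine Lie-theoretic structure (surjectivity of $\exp$ for solvable groups, Jordan-type decompositions into commuting pieces, control of the centre) to reduce to the one-parameter and compact cases. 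Everything else — the tree dictionary, the finite-quotient reduction, and the Baire argument excluding fixed ends — is comparatively routine once the closedness of $O$ and $L$ is exploited.
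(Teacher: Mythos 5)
Your tree-theoretic framework (the Bass--Serre action, ellipticity versus hyperbolicity, the Baire argument excluding a fixed end, the finite-quotient reduction) is sound and is a legitimate alternative to the paper's combinatorial length arguments in the amalgam; your treatment of one-parameter subgroups via divisibility parallels the paper's Lemma~\ref{roots} on elements with roots of all orders. But the step you yourself flag as the main obstacle is a genuine gap, on both of the routes you sketch. The inductive route needs generators $X_1,\dots,X_d$ of $\mathrm{Lie}(G)$ with every pair contained in a proper subalgebra; this already fails for the two-dimensional non-abelian algebra $[X,Y]=Y$, where any generating pair generates the whole algebra, so the induction cannot get started at the bottom of the solvable case. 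The fallback route rests on the claim that $\exp$ is surjective for connected solvable Lie groups because such a group is a quotient of a simply connected solvable one; that claim is false --- the universal cover of the Euclidean motion group $\R^2\rtimes\R$ is simply connected and solvable and its exponential map is not surjective. So neither route establishes that every element of $\varphi(G)$ is elliptic, and the semisimple/Jordan-decomposition sketch is too vague to repair this on its own.

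The paper sidesteps the issue entirely, and you could too: it never proves per-element ellipticity. Instead it proves (Proposition~\ref{main}, via Levi--Malcev and Iwasawa decompositions, with a Malcev basis and Baker--Campbell--Hausdorff in the nilpotent case and a finite-power-of-a-neighborhood argument in the compact case) that a connected Lie group is \emph{uniformly finitely generated}: $G_0=\exp(\R V_1)\cdots\exp(\R V_N)$ as a product of finitely many fixed one-parameter subgroups. Each $\varphi(\exp(\R V_i))$ has bounded length (equivalently, bounded orbit of a base vertex in your tree), so subadditivity of the length bounds every element of $\varphi(G_0)$ by $\sum_i a(i)$, and a subgroup of bounded length fixes a vertex (Lemma~\ref{bounded}). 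In your language: replace ``every element is elliptic, then apply Baire'' by ``the orbit of a base vertex under $\varphi(G_0)$ is bounded because $G_0$ is a finite product of sets with bounded orbits.'' This closes the gap with no Jordan decomposition, no surjectivity of $\exp$, and no special choice of generators; the rest of your argument (the reduction to $G$ connected and the one-parameter case) can be kept as is.
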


The outline of this paper is the following. In section~\ref{LieSubgroups} we prove Theorem \ref{Lie}. In section \ref{Classification} we prove Theorem
\ref{action}. In section \ref{Examples} we apply Theorem \ref{StructureTheorem} to give new examples of holomorphic automorphisms of $D_p$ not contained in the overshear group $\text{OS} (D_p)$.

\section{Lie subgroups of a free amalgamated product}\label{LieSubgroups}

The aim of this section is to prove the following theorem. For the notion of amalgamated product we refer the reader to \cite{Serre}.

\begin{theorem}\label{Liesubgroup}
Let $\mathcal G$ be a topological group which is a free amalgamated product  $O*_{O \cap L} L$ of two  closed subgroups $O, L$. Furthermore let $G$ be a Lie group with finitely many connected components and $\varphi \colon G \to \mathcal G$ be a continuous group homomorphism . Then $\varphi(G)$ is conjugate to a subgroup of $O$ or $L$.
\end{theorem}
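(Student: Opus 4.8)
The plan is to use the action of $\mathcal G$ on its Bass–Serre tree $T$ and the structure theory of groups acting on trees (Serre, Tits). Recall that for the amalgam $\mathcal G = O *_{O\cap L} L$ there is a tree $T$ on which $\mathcal G$ acts without inversions, with fundamental domain an edge, vertex stabilizers the conjugates of $O$ and $L$, and edge stabilizers the conjugates of $O\cap L$. An element $g\in\mathcal G$ is called \emph{elliptic} if it fixes a point of $T$, equivalently if it is conjugate into $O$ or into $L$; otherwise it is \emph{hyperbolic} and has a translation axis on which it acts by a nontrivial translation. The theorem is therefore equivalent to: $\varphi(G)$ fixes a point of $T$. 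The two classical obstructions to a subgroup $H\le\mathcal G$ fixing a point of $T$ are (i) $H$ contains a hyperbolic element, and (ii) $H$ is generated by elliptic elements whose fixed point sets are pairwise disjoint (so that, e.g., a product of two such elements is hyperbolic). I would rule both out for $H=\varphi(G)$.

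First I would reduce to the connected case: since $G$ has finitely many connected components, $G^\circ$ has finite index, and by a standard averaging/finiteness argument it suffices to handle $G^\circ$ — more precisely, if $\varphi(G^\circ)$ fixes a point then its (finite) fixed-point set in $T$ is a subtree of finite diameter, which is permuted by the finite group $\varphi(G)/\varphi(G^\circ)$, and a finite group acting on a finite-diameter tree (without inversions, after one barycentric subdivision) fixes a point; hence $\varphi(G)$ fixes a point too. So assume $G$ connected. Now the heart of the matter: \textbf{every element of a connected topological group, mapped continuously into $\mathcal G$, lands in the elliptic part.} The key input is that $\varphi$ is continuous and $O$, $L$ are \emph{closed}. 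One shows the set $E=\{g\in\mathcal G: g \text{ elliptic}\}$ pulls back under $\varphi$ to a closed subset of $G$: indeed $E=\bigcup_{h}(hOh^{-1}\cup hLh^{-1})$, and using the action on $T$ together with closedness of $O,L$ one checks that for a one-parameter family $t\mapsto\varphi(\exp tX)$ the translation length $\ell(\varphi(\exp tX))$ is a continuous function of $t$ which vanishes at $t=0$; a hyperbolic element has a \emph{positive} translation length, and in an amalgam the translation lengths are bounded below by a positive constant once nonzero (this is the crucial arithmetic fact — translation length of a hyperbolic element of $O*_{C}L$ equals twice the ``syllable length'' of a cyclically reduced form, hence is a positive \emph{integer}). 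So $\ell\circ\varphi\circ\exp$ is continuous, integer-valued, and zero at $0$, hence identically zero on a neighborhood of $0$; since $G$ is connected it is generated by such a neighborhood, and $\ell$ being a conjugacy invariant that is subadditive-ish along the group, one gets $\varphi(\exp tX)$ elliptic for all $t$ and all $X$, then $\varphi(g)$ elliptic for all $g\in G$.

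Having shown every $\varphi(g)$ is elliptic, I would invoke the theorem of Serre (Corollaire 3 to Théorème 15 in \cite{Serre}, with the Tits refinement removing the finitely-generated hypothesis): if $H$ acts on a tree $T$ so that every element of $H$ is elliptic, then either $H$ fixes a point of $T$, or $H$ fixes an end of $T$. In the latter case $H$ maps onto a nontrivial subgroup of $\Z$ (the ``Busemann'' homomorphism counting signed distance toward the fixed end), so $H$ has a quotient isomorphic to $\Z$; but $H=\varphi(G)$ is a continuous image of a connected group, hence divisible (as $\exp$ shows, every element has roots of all orders through the one-parameter subgroups — or simply: a connected group has no proper finite-index subgroups and its abelianization is divisible), contradicting the existence of a $\Z$-quotient. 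Therefore $\varphi(G)$ fixes a vertex of $T$, i.e. is conjugate into $O$ or into $L$, which is the assertion.

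\textbf{Where the difficulty lies.} The routine parts are the reduction to $G$ connected and the final Serre/Tits dichotomy; the real obstacle is the continuity-plus-closedness argument showing the whole one-parameter image is elliptic. One must genuinely use that $O,L$ are closed (otherwise the conclusion is false — e.g. a dense one-parameter subgroup landing "near" a factor) and that translation length in an amalgamated free product is quantized: the discreteness of the translation-length spectrum is what converts the qualitative continuity of $t\mapsto\varphi(\exp tX)$ into the rigid conclusion that the translation length is \emph{identically} zero rather than merely small. I would expect to spend most of the proof establishing this quantization and the continuity of $t\mapsto\ell(\varphi(\exp tX))$ carefully, perhaps by working directly with reduced words: a hyperbolic element, written in reduced form relative to the amalgam, has syllable length $\ge 2$, and a short computation with the normal form shows that an element sufficiently "close" (in the topology induced from $\mathcal G=\Aut(D_p)$ with the compact-open topology, using closedness of $O_1,O_2$) to the identity cannot have syllable length $\ge 2$ — this local triviality near $1$ is exactly the step where the hypotheses are consumed.
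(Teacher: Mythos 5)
Your overall strategy (Bass--Serre tree, elliptic vs.\ hyperbolic, translation length) is a legitimate alternative to the paper's combinatorial length argument, but as written it has two genuine gaps, one in each half.

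The first gap is the claim that $t\mapsto \ell(\varphi(\exp tX))$ is continuous because ``an element sufficiently close to the identity cannot have syllable length $\ge 2$.'' That local triviality is false for an amalgam of two closed subgroups of a topological group, and in particular for $\text{OS}(D_p)=O_1*O_2$: choose $a\in O_1\setminus(O_1\cap O_2)$ and $b\in O_2\setminus(O_1\cap O_2)$ arbitrarily close to the identity in the compact-open topology; then $ab$ is cyclically reduced of length $2$, hence hyperbolic, yet arbitrarily close to $\mathrm{id}$. Closedness of $O$ and $L$ does not make the set of elliptic elements closed, since that set is a union of conjugates $\bigcup_h h(O\cup L)h^{-1}$. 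The ellipticity of one-parameter images must instead be obtained purely algebraically, from the divisibility you mention in passing: $\varphi(t)=\varphi(t/n)^n$ has roots of all orders inside the one-parameter subgroup, and since translation length satisfies $\ell(g^n)=n\,\ell(g)$ and is a nonnegative integer, $\ell(\varphi(t))=0$. This is exactly the paper's Lemma~\ref{roots} (proved there via the parity Lemma~\ref{CommutingLemma} on word lengths); continuity and closedness are consumed only once, at the end of the one-parameter case, to pass from the dense subgroup $\Z+\Z\sqrt2$ to all of $\R$.

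The second, more serious gap is the passage from ``every element of $\varphi(G)$ is elliptic'' to ``$\varphi(G)$ fixes a vertex.'' The Serre--Tits dichotomy indeed leaves the alternative that $H$ fixes an end without fixing a vertex, but your method of excluding it --- a surjection onto a nontrivial subgroup of $\Z$ via the Busemann homomorphism --- cannot work, because an elliptic element fixing the end $\xi$ fixes an entire ray toward $\xi$ and so has Busemann image $0$; the homomorphism vanishes identically on an all-elliptic group. The end-fixing case is the genuine ``parabolic'' possibility in which $H$ is a strictly increasing union of vertex stabilizers, and some additional uniformity is needed to rule it out. Supplying that uniformity is the actual heart of the paper's proof: Proposition~\ref{main} shows every connected Lie group is covered by the product of \emph{finitely many fixed} one-parameter subgroups (Levi--Malcev and Iwasawa decompositions, plus a Malcev-basis induction for the nilpotent part), so $\varphi(G_0)$ has uniformly bounded length, i.e.\ a bounded orbit on the tree, and then Lemma~\ref{bounded} concludes. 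This uniform finite generation step is absent from your plan and cannot be replaced by the dichotomy alone. (One could instead close the gap with a Baire-category argument, writing $G^\circ=\bigcup_n\varphi^{-1}(\mathrm{Stab}(v_n))$ as a countable union of closed subgroups along a ray to the fixed end, one of which must then be open and hence all of $G^\circ$ --- but some such input beyond pointwise ellipticity is required.)
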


\noindent We need the following facts:

\begin{proposition}\label{ConjugateProp}
Every element of a free amalgamated product $O*_{O \cap L} L$ is conjugate either to an element of $O$ or $L$ or to a cyclically reduced element. Every cyclically reduced element is of infinite order.
\end{proposition}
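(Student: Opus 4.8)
The statement is a standard fact about amalgamated products, so the plan is to follow the normal-form theory from Serre's book. Recall that an element $g \in O*_{O \cap L} L$ is written in \emph{reduced form} as $g = g_1 g_2 \cdots g_n$ with each $g_i$ lying alternately in $O \setminus (O\cap L)$ or $L \setminus (O\cap L)$, and that the normal-form theorem asserts that if $n \geq 1$ and the factors genuinely alternate (and in the cyclic case $n\geq 2$), then $g \neq e$. Call $g$ \emph{cyclically reduced} if $n \leq 1$, or $n \geq 2$ and additionally $g_1$ and $g_n$ lie in different factors (so that the cyclic word has no cancellation when its ends are brought together). The first step is to show every element is conjugate to a cyclically reduced one: given a reduced word $g = g_1 \cdots g_n$ with $n \geq 2$ and $g_1, g_n$ in the \emph{same} factor, conjugate by $g_1$ to get $g_1^{-1} g g_1 = g_2 \cdots g_{n-1}(g_n g_1)$, which either has strictly smaller length or, if $g_n g_1 \in O\cap L$, collapses further; iterate. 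This terminates, and the result is either an element of $O$ or $L$ (the cases $n=0,1$, or $n=2$ with $g_n g_1 \in O\cap L$ after collapse) or a cyclically reduced word of length $\geq 2$.

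For the second assertion, let $g = g_1 \cdots g_n$ be cyclically reduced with $n \geq 2$. Then for every $k \geq 1$, the $k$-fold power $g^k = (g_1 \cdots g_n)(g_1 \cdots g_n)\cdots(g_1 \cdots g_n)$ is \emph{already} in reduced form of length $kn$: because $g_n$ and $g_1$ lie in different factors, no cancellation occurs at the junctions between successive copies of the word. By the normal-form theorem, a nonempty reduced word of length $kn \geq 2$ represents a nontrivial element, so $g^k \neq e$ for all $k \geq 1$; hence $g$ has infinite order.

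The only genuine subtlety — and the one step I would be careful to state precisely rather than wave at — is the bookkeeping in the conjugation-reduction argument: one must track the cases where $g_n g_1$ lands in $O \cap L$, causing the adjacent letters $g_{n-1}$ and $g_2 \cdot(g_n g_1)$ (now in the same factor) to merge, which can cascade. The clean way to handle this is induction on the length $n$: each conjugation step produces a reduced word of length $\leq n$, and strictly $< n$ unless the word is already cyclically reduced, so the process halts. Everything else is a direct appeal to the normal-form theorem for amalgamated products, which we take as known from \cite{Serre}.
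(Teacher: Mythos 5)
Your argument is correct, and it is essentially the standard proof; the paper itself does not prove this proposition at all, but simply cites Proposition~2 of Section~1.3 of Serre's \emph{Trees}, so your write-up is a self-contained reconstruction of the argument behind that citation rather than a genuinely different route. Two small points of bookkeeping. In the collapse case, when $g_n g_1 \in O\cap L$ the product is absorbed into the preceding letter, giving $g_2\cdots g_{n-2}\,(g_{n-1}g_ng_1)$; since $g_{n-1}\notin O\cap L$ and $O\cap L$ is a subgroup, the new last letter is again outside $O\cap L$, so this word is already reduced of length $n-2$ and the ``cascade'' you worry about cannot actually occur --- each conjugation step strictly decreases the length until the word is cyclically reduced, exactly as your induction requires (your phrase ``$g_2\cdot(g_ng_1)$'' appears to be a slip for $g_{n-1}\cdot(g_ng_1)$). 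Second, you should fix the convention that ``cyclically reduced'' means length $n\geq 2$ with $g_1$ and $g_n$ in different factors: with $n\leq 1$ included, the second assertion would be false for torsion elements of $O$ or $L$. This convention also gives for free the fact, used later in the paper's proof of Lemma~\ref{roots}, that a cyclically reduced element has \emph{even} length. With those clarifications, your length computation $l(g^k)=kn\geq 2$ and the appeal to the normal form theorem correctly establish infinite order.
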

\begin{proof}
See Proposition~$2$ in section~$1.3$ in~\cite{Serre}.
\end{proof}

\begin{lemma}\label{bounded}
A subgroup $H$ of a free amalgamated product $O*_{O \cap L} L$ is conjugate to a subgroup of $O$ or $L$ if and only if $H$ is of bounded length.
\end{lemma}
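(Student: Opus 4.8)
The plan is to prove both implications via the standard theory of groups acting on trees associated to an amalgamated product (following Serre). Recall that $\mathcal G = O *_{O \cap L} L$ acts on its Bass--Serre tree $T$, whose vertices are the cosets $gO$ and $gL$ and whose edges are the cosets $g(O \cap L)$; the vertex stabilizers are exactly the conjugates of $O$ and $L$. The length of an element (or the length function on the subgroup $H$) measures, in the normal-form sense, how far that element moves the base vertex $v_0$ corresponding to $O$ (more precisely, the combinatorial length of a reduced word is comparable to the distance $d_T(v_0, g v_0)$, with the cyclically reduced elements being the hyperbolic ones that translate along an axis).

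First I would prove the easy direction: if $H$ is conjugate into $O$ or $L$, say $H \subseteq h O h^{-1}$, then $H$ fixes the vertex $h v_0$ of $T$, so $d_T(v_0, g v_0) \le 2 d_T(v_0, h v_0)$ for every $g \in H$, which bounds the lengths of all elements of $H$ uniformly. Conversely, suppose $H$ has bounded length, i.e. the set $\{ d_T(v_0, g v_0) : g \in H\}$ is bounded. Then the orbit $H \cdot v_0$ is a bounded subset of the tree $T$. A group acting on a tree with a bounded orbit must fix a point — one way to see this is to take the (finite) convex hull of a maximal-diameter pair of orbit points and note that $H$ permutes a finite subtree, hence fixes its center (a vertex or the midpoint of an edge); equivalently, no element of $H$ can be hyperbolic, since a hyperbolic element has unbounded orbit, so by Proposition~\ref{ConjugateProp} every element of $H$ is conjugate into $O$ or $L$, i.e. elliptic. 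A group all of whose elements are elliptic and which has a bounded orbit fixes a vertex; that vertex is some coset $gO$ or $gL$, and $H \subseteq g O g^{-1}$ or $H \subseteq g L g^{-1}$ accordingly.

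The one point that needs a little care — and is the main obstacle — is the step asserting that $H$ fixes a vertex rather than only the midpoint of an edge: an inversion could in principle occur. But in the amalgamated-product tree the edge stabilizers are conjugates of $O \cap L$ and no element of $\mathcal G$ interchanges the two endpoints of an edge (the two endpoints lie in the two distinct parts of the bipartition $\{gO\} \sqcup \{gL\}$, which is preserved by the $\mathcal G$-action), so inversions do not occur and a bounded orbit forces a genuine fixed vertex. With that observed, the equivalence follows. I would also remark that this lemma is where the hypothesis on $G$ is \emph{not} used; it is a purely group-theoretic statement about subgroups of $\mathcal G$, and the Lie-theoretic input enters only later, in deducing that $\varphi(G)$ actually has bounded length.
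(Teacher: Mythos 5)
Your proof is correct, but it takes a genuinely different --- and substantially more detailed --- route than the paper, which disposes of Lemma~\ref{bounded} in one line as ``a direct consequence of Proposition~\ref{ConjugateProp}.'' The forward direction (conjugate into a factor $\Rightarrow$ bounded length) is elementary either way. For the converse, the paper's intended reading is presumably: if $H$ has bounded length then no element of $H$ can be conjugate to a cyclically reduced element of length $\ge 2$ (since such an element has powers of unbounded length), so by Proposition~\ref{ConjugateProp} every element of $H$ is individually conjugate into $O$ or $L$. But that element-wise statement does not by itself yield a \emph{single} conjugating element working for all of $H$ simultaneously, which is what the lemma asserts; closing that gap is exactly what your Bass--Serre argument does. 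Interpreting bounded length as a bounded orbit of the base vertex in the tree, invoking the fixed-point theorem for groups acting on trees with bounded orbits, and ruling out inversions via the bipartite structure of the vertex set is the standard and correct way to get the simultaneous conjugation (it is essentially Serre's own treatment in \emph{Trees}, I.4.3). So your proof buys rigor at the step the paper elides, at the cost of importing the tree machinery; the paper's version buys brevity by leaning on the reader's familiarity with that machinery. Your closing remark that the lemma is purely group-theoretic and independent of the Lie hypotheses is also accurate and consistent with how the paper uses it.
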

\begin{proof}
This is a direct consequence of Proposition~\ref{ConjugateProp}.
\end{proof}

\begin{lemma}\label{CommutingLemma}
Let $g_1$ and $g_2$ be two commuting elements of $O *_{O \cap L} L$ with lengths $\geq 1$, then $l(g_1)$ and $l(g_2)$ are both even or both odd.
\end{lemma}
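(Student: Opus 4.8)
The natural framework is the Bass–Serre theory: the amalgamated product $\mathcal G = O *_{O\cap L} L$ acts on its Bass–Serre tree $T$, whose vertices are the cosets $gO$ and $gL$ and whose edges are the cosets $g(O\cap L)$. Under this action, an element $g$ with $l(g)=0$ (i.e. conjugate into $O$ or $L$) fixes a vertex (it is \emph{elliptic}), while an element with $l(g)\geq 1$ and whose reduced form is cyclically reduced is \emph{hyperbolic}, acting as a translation along its axis $A_g$ by an amount equal to its translation length. The first step I would carry out is to recall the precise dictionary between the combinatorial length $l(g)$ and the geometry of the $T$-action: for $g$ hyperbolic, the translation length along the axis is even when the cyclically reduced form of $g$ has even syllable length, and odd when it has odd syllable length — this is because a syllable from $O$ and a syllable from $L$ each move the basepoint across one edge, but the parity of how the axis sits relative to the bipartition of $T$ (vertices of type $O$ versus type $L$) is exactly recorded by the parity of $l(g)$.

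Next I would handle the easy reductions. By Lemma \ref{bounded} together with Proposition \ref{ConjugateProp}, an element of length $\geq 1$ is either conjugate to an element of $O$ or $L$ — but conjugation does not change whether the length is zero, and an element conjugate into a factor has a conjugate of length $1$ (a single syllable), which would then be elliptic, contradicting... actually the cleaner statement is: up to replacing $g_1, g_2$ by a simultaneous conjugate (which preserves commutativity and, by Proposition \ref{ConjugateProp}, the relevant structure), I may assume $g_1$ is cyclically reduced, hence hyperbolic with axis $A_{g_1}$. Since $g_2$ commutes with $g_1$, $g_2$ preserves the axis $A_{g_1}$ and acts on it; because $g_2$ also has length $\geq 1$ it cannot be elliptic with a fixed point off the axis in a way that contradicts commutation — the standard fact is that two commuting isometries of a tree, one of which is hyperbolic, share the same axis, so $g_2$ is hyperbolic with $A_{g_2} = A_{g_1}$. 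Then both $g_1$ and $g_2$ translate along the \emph{same} line in $T$, so their translation directions and the parity of their displacement relative to the $O/L$-bipartition of the vertices on that common axis agree in the sense needed: $l(g_1) \equiv l(g_2) \pmod 2$.

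The one subtlety — and the main obstacle — is bookkeeping the relation "parity of $l(g)$" $\leftrightarrow$ "parity of translation length, measured against the bipartition of $T$." Translation length alone is not enough: a hyperbolic element always translates by a positive integer, and I specifically need that an element with $l(g)$ odd moves each vertex on its axis of type $O$ to one of type $L$ and vice versa, whereas $l(g)$ even preserves the types along the axis. This is where I would be careful: one shows that for a cyclically reduced $g = s_1 s_2 \cdots s_n$ (syllables alternating between $O\setminus(O\cap L)$ and $L\setminus(O\cap L)$), the standard path from the base vertex to its $g$-translate has combinatorial length $n = l(g)$, and the endpoints have the same type iff $n$ is even. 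Since a common axis forces a compatible bipartition-parity for $g_1$ and $g_2$, the conclusion follows. I would close by noting the degenerate case where $A_{g_1}$ might not be uniquely determined needs the hypothesis that $O, L$ are proper (so $\mathcal G$ is a nontrivial amalgam and $T$ is not a point or a single edge); this is implicit in the setup since otherwise "length" is not meaningful.

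\medskip

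Let me also record the alternative, purely combinatorial route, in case one prefers to avoid trees: write $g_1 = u h_1 u^{-1}$, $g_2 = u h_2 u^{-1}$ with $h_1$ cyclically reduced of length $n_1 \geq 1$; from $g_1 g_2 = g_2 g_1$ deduce $h_1 h_2 = h_2 h_1$; then analyze the reduced form of $h_1 h_2$ computed in the two orders and use the uniqueness of reduced forms in an amalgamated product (Serre, \cite{Serre}, §1.3) to force $h_2$ to be a "power of the same cyclic word" as $h_1$ up to the amalgamated subgroup, whence $l(h_2) \equiv l(h_1) \pmod 2$, and lengths of $g_i$ versus $h_i$ differ in a controlled way (they are equal when $h_i$ is already cyclically reduced after conjugation). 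The tree argument is cleaner and less computational, so that is the one I would write up in detail; the combinatorial argument is the main obstacle precisely because making "power of the same cyclic word" rigorous requires some case analysis on whether the first and last syllables of $h_1$ lie in the same factor.
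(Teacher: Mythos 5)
Your plan has a genuine gap at its central step. The quantity $l(g)$ — and even its parity — is \emph{not} a conjugacy invariant, so the reduction ``up to a simultaneous conjugate I may assume $g_1$ is cyclically reduced'' discards exactly the information the lemma is about. Concretely, in $O*L$ with $O=\langle a\rangle$, $L=\langle b\rangle$ and trivial amalgam, the element $g_1=a(ab)a^{-1}=a^2\cdot b\cdot a^{-1}$ has length $3$ and its square $a^2\cdot b\cdot a\cdot b\cdot a^{-1}$ has length $5$ (both odd), while their conjugates $ab$ and $(ab)^2$ have lengths $2$ and $4$ (both even). The lemma's conclusion happens to survive this conjugation, but proving that it always does is precisely the content of the lemma; your write-up flags this as ``the main obstacle'' and then does not resolve it. The tree-theoretic data you invoke cannot resolve it on its own: the action on the Bass--Serre tree preserves the bipartition into $O$-type and $L$-type vertices, so every hyperbolic element has \emph{even} translation length and ``parity of displacement against the bipartition'' is vacuous. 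The parity of $l(g)$ is instead governed by how the base edge $e_0$ sits relative to $g$ (one has $l(g)=d(e_0,ge_0)$ in the edge metric), and a correct tree proof must track the projection of $e_0$ onto the common axis — bookkeeping you do not carry out. In addition, your argument silently assumes $g_1$ is hyperbolic; elements of length $1$, and more generally elements conjugate into a factor, are elliptic and have no axis, so the elliptic--elliptic and elliptic--hyperbolic cases are not covered. The ``alternative combinatorial route'' has the same unproven step hidden in the phrase ``lengths of $g_i$ versus $h_i$ differ in a controlled way.''

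For comparison, the paper's proof avoids all of this by never conjugating: it supposes $l(g_1)$ even and $l(g_2)$ odd, reads off from the reduced forms that the first and last syllables of $g_1$ lie in different factors while those of $g_2$ lie in the same factor, and then checks in each case that exactly one of the products $g_1g_2$, $g_2g_1$ admits a merge of adjacent syllables, forcing $l(g_1g_2)\neq l(g_2g_1)$ and contradicting commutativity. If you want to keep the tree language, the fix is to work with $d(e_0,g_ie_0)$ directly (or with the path of edges it determines) rather than with conjugacy-invariant quantities, and to treat the elliptic cases separately; but the direct two-case computation on reduced words is shorter.
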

\begin{proof}
Assume that $g_1=a_1\cdots a_m$ and $g_2 = b_1 \cdots b_n$ are two commuting elements. Assume, for a contradiction, that $l(g_1)$ is even and that $l(g_2)$ is odd. Since $g_1$ has even length, the first and last element of the chain $a_1, \dots, a_m$ have to alter between $O$ and $L$. Similarly, the first and last element of the chain $g_2$s has to be contained in either $O$ or $L$.  

Assume first that $a_1 \in O$ and $a_m \in L$ and that $b_1,b_n \in O$. Then, since $a_m$ and $b_1$ alter between $L$ and $O$, $l(g_1g_2) = m+n$. The assumption that $g_1$ and $g_2$ are commuting, yields that the corresponding length of $g_2 \cdot g_1$ has to be the same as the length of $g_1 \cdot g_2$. Clearly
$$b_1 \cdots b_n \cdot a_1 \cdots a_m = b_1 \cdots b_{n-1} \cdot c \cdot a_2 \cdots a_m,$$
where $c=b_n \cdot a_1 \in O$. Hence $l(g_2g_1) = m + n -1 < m+n = l(g_1g_2)$, which contradicts our assumption. 

If we assume that $a_1 \in O$ and $a_m \in L$ and that $b_1,b_n \in L$ a similar contradiction is obtained. In fact, $l(g_1g_2)=m+n-1 < m+n = l(g_2g_1)$. 

Similar calculations are obtained if $a_1 \in L$ and $a_m \in O$, where we have to consider both of the cases $b_1,b_n \in L$ and $b_1,b_n \in O$.
\end{proof}

\begin{lemma}\label{roots}
If an element $g$ of a free amalgamated product $O*_{O \cap L} L$ has roots of arbitrary order, then it
is conjugate to an element in $O$ or to an element in $L$.
\end{lemma}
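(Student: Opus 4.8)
The plan is to argue by contradiction using the structure theory of amalgamated products recalled in Proposition~\ref{ConjugateProp}. Suppose $g$ is not conjugate to an element of $O$ nor to an element of $L$. Then by Proposition~\ref{ConjugateProp}, $g$ is conjugate to a cyclically reduced element $g_0 = c_1 c_2 \cdots c_k$ of some length $k = l(g_0) \geq 1$, and moreover $g_0$ has infinite order. By hypothesis $g$ has roots of arbitrary order, and since conjugation carries $n$-th roots to $n$-th roots, so does $g_0$; fix, for each $n$, an element $h_n$ with $h_n^n = g_0$. The idea is to extract from this a contradiction by tracking lengths: a high root of a cyclically reduced word cannot be too short, yet conjugacy invariants force it to be controlled.

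First I would handle the case $l(g_0)=1$ separately — but note that a length-one cyclically reduced element is exactly an element of $O\setminus L$ or $L\setminus O$ that is not conjugate into the amalgamated subgroup, so in fact $g_0$ already lies in $O$ or $L$, contradicting our assumption; hence we may assume $k = l(g_0)\geq 2$ and even (cyclically reduced elements of even length are the generic case — actually all cyclically reduced elements have even length $\geq 2$ after the length-one case is removed, since the first and last syllables must lie in different factors). Now the key step: I claim $l(h_n)\geq 1$ for $n$ large, and then Lemma~\ref{CommutingLemma} applies since $h_n$ commutes with $g_0 = h_n^n$. Thus $l(h_n)$ and $l(g_0)$ have the same parity, so $l(h_n)$ is even, say $l(h_n) = 2m_n$. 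The main mechanism is that when one multiplies a cyclically reduced word by itself $n$ times no cancellation occurs across the joints (that is precisely the meaning of "cyclically reduced"), so if $h_n$ is itself cyclically reduced then $l(h_n^n) = n\, l(h_n)$, forcing $k = n\cdot 2m_n$, impossible once $n > k$. If $h_n$ is not cyclically reduced, conjugate it: write $h_n = u h_n' u^{-1}$ with $h_n'$ cyclically reduced (or in a factor); then $g_0 = h_n^n = u (h_n')^n u^{-1}$, so $(h_n')^n$ is conjugate to the cyclically reduced word $g_0$, and the conjugacy theorem for amalgamated products (Proposition~2 of §1.3 in~\cite{Serre}, which states that two cyclically reduced elements are conjugate iff one is obtained from the other by cyclic permutation of syllables, so in particular they have equal length) gives $l((h_n')^n) = l(g_0) = k$; but $h_n'$ is cyclically reduced of length $l(h_n') \geq 1$ with the same parity as $k$, hence of length $\geq 2$ (the length-one subcase again reducing to a contradiction with non-conjugacy into a factor), so $l((h_n')^n) = n\, l(h_n') \geq 2n$, contradicting $l((h_n')^n)=k$ once $n > k/2$.

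I expect the main obstacle to be the bookkeeping around reduced versus cyclically reduced forms and making the "no cancellation in powers of a cyclically reduced word" step fully rigorous — in particular confirming that $h_n$ genuinely has positive length for $n$ large (if $l(h_n)=0$ then $h_n \in O\cap L$ is central-ish only within a factor and $h_n^n$ would have length $0$, contradicting $l(g_0)=k\geq 2$, so this is easy) and handling the parity/length-one edge cases cleanly. Everything else is a direct application of the cited structure results: Proposition~\ref{ConjugateProp} to put $g$ in normal form, Lemma~\ref{CommutingLemma} to pin down parity, and the conjugacy classification of cyclically reduced elements to bound lengths, from which the existence of arbitrarily high roots produces the contradiction.
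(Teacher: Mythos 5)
Your argument is correct and follows essentially the same route as the paper: pass to a cyclically reduced conjugate of even length $\geq 2$, use Lemma~\ref{CommutingLemma} to force the roots to have even length, and derive a contradiction from the linear growth of length under taking powers. The only real difference is cosmetic: the paper conjugates every root by the \emph{same} element $h$, so that $(h^{-1}g^{1/n}h)^n = h^{-1}gh$ exactly and the even-length conjugated root is automatically cyclically reduced --- which makes your second case (a root of even length that is not cyclically reduced) vacuous and removes the need to invoke the conjugacy classification of cyclically reduced words, a fact the paper never has to cite.
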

\begin{proof}
Assume that $g$ is not conjugate to an element in $O$ or to an element in $L$. Then, by Proposition~\ref{ConjugateProp}, $g$ is conjugate to a cyclically reduced element, say $h^{-1}gh$, which has even length $\geq 2$ by definition of a cyclically reduced element. For each $n > 0$ we have that $h^{-1}gh= h^{-1}(g^{1/n})^nh$, since $g$ as roots of arbitrary order. Hence $h^{-1}g^{1/n}h$ is not an element of $O$ or $L$, since it equals $h^{-1}gh$. Furthermore
\begin{multline*}
h^{-1}(g^{1/n})^nh \cdot h^{-1}gh = h^{-1}(g^{1/n})^ngh = h^{-1} ggh = \\ 
 = h^{-1}g (g^{1/n})^n h =  h^{-1}g h \cdot h^{-1} (g^{1/n})^n h
\end{multline*}

\noindent we conclude that $h^{-1}gh$ and $h^{-1}g^{1/n}h$ commute. Whence, Lemma~\ref{CommutingLemma} implies that $h^{-1}g^{1/n}h$ has even length (since $h^{-1}gh$ has even length), and is thus cyclically reduced. Hence
$$l(h^{-1}gh)=l(h^{-1}(g^{1/n})^nh)= |n|l(h^{-1}g^{1/n}h) \geq |n|\;,$$
for all $n > 0$, contradicting the fact that all elements of $O*_{O \cap L} L$ have finite length.
\end{proof}

First let us establish Theorem~\ref{Liesubgroup} in the case of a one-parameter subgroup:

\begin{proposition}\label{oneparameter}
Let $\mathcal G$ be a topological group which is a free amalgamated product $O*_{O \cap L} L$ of two closed subgroups $O$ and $L$. Let $\varphi \colon \R \to \mathcal G$ be a continuous one-parameter subgroup. Then $\varphi(\R)$ is conjugate to a subgroup of $O$ or $L$.
\end{proposition}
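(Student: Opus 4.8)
The plan is to exploit the divisibility of the additive group $\R$: every element $\varphi(t)$ has roots of arbitrary order, namely $\varphi(t/n)^n = \varphi(t)$. By Lemma~\ref{roots}, each individual element $\varphi(t)$ is therefore conjugate to an element of $O$ or of $L$. The real content is to upgrade this pointwise statement to a single simultaneous conjugation working for the whole image $\varphi(\R)$; by Lemma~\ref{bounded} it suffices to show that $\varphi(\R)$ has bounded length in the amalgamated product.

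First I would dispose of the trivial case: if $\varphi(t_0)$ lies in the amalgam $O\cap L$ (or is the identity) for some $t_0$, one can still have a nontrivial $\varphi$, so instead I argue as follows. Suppose $\varphi(\R)$ is \emph{not} of bounded length. Since $\R$ is connected and $\varphi$ continuous, and since length is (lower semi-)controlled along the image, I would first show there is some $t_1$ with $l(\varphi(t_1)) \geq 1$, and that $\varphi(t_1)$ is not conjugate into $O\cap L$ in a way compatible with the whole group — more precisely, I would look at $g := \varphi(s)$ for a well-chosen $s$ and apply Lemma~\ref{roots} to conclude $g$ is conjugate to an element $a$ of, say, $O$: $g = h a h^{-1}$. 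Then for every rational $q$, $\varphi(qs) = h a^q h^{-1}$ where $a^q := h^{-1}\varphi(qs)h$; I must check $a^q \in O$. This follows because $\varphi(qs)$ commutes with $\varphi(s) = g$, hence $a^q = h^{-1}\varphi(qs)h$ commutes with $a \in O$, and — using Lemma~\ref{CommutingLemma} together with the fact that $a^q$ is itself a divisible element so Lemma~\ref{roots} applies to it as well — $a^q$ cannot be cyclically reduced, so it is conjugate into a factor; a parity/commuting argument with $a$ pins it down to $O$ after possibly adjusting $h$. By continuity and density of $\R \cdot s$-type arguments (more carefully: density of $\{qs : q \in \Q\}$ in $\R$ together with closedness of $O$), $\varphi(\R) \subseteq hOh^{-1}$, giving bounded length and contradicting the assumption; in the bounded case Lemma~\ref{bounded} finishes directly.

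The cleaner route, which I would actually write up, is: pick any $t \neq 0$; by Lemma~\ref{roots} applied to $\varphi(t)$ (which has $n$-th roots $\varphi(t/n)$ for all $n$), $\varphi(t)$ is conjugate to an element of $O$ or $L$ — say after conjugating by $h_t$ we land in $O$. For a fixed such $t$, every $\varphi(s)$ with $s/t \in \Q$ commutes with $\varphi(t)$ and is itself divisible, so the same analysis via Lemma~\ref{CommutingLemma} and Lemma~\ref{roots} forces $h_t^{-1}\varphi(s)h_t \in O$ for all rational multiples $s$ of $t$; since $\Q t$ is dense in $\R$ and $O$ is closed and $\varphi$ continuous, $h_t^{-1}\varphi(\R)h_t \subseteq O$. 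Hence $\varphi(\R)$ has bounded length and Lemma~\ref{bounded} yields the conclusion.

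The main obstacle is the middle step: ruling out the possibility that, although each $\varphi(s)$ for $s \in \Q t$ is individually conjugate into a factor, the conjugating elements genuinely depend on $s$ so that the image as a whole is unbounded. Resolving this is exactly where the commuting structure is essential — one needs that a family of pairwise-commuting divisible elements, each conjugate into a factor, is \emph{simultaneously} conjugate into one factor. I would prove this by fixing the conjugator $h_t$ from a single nontrivial element and showing every other element of the commuting family, being divisible and commuting with a length-$\leq 1$ (hence factor) element $a = h_t^{-1}\varphi(t)h_t$, must also have length $\leq 1$: if it had length $\geq 2$ it would be cyclically reduced (by Lemma~\ref{roots} plus divisibility), but then Lemma~\ref{CommutingLemma} would force $l(a) \geq 1$ to have the same parity, and a short direct computation with the cyclically reduced form — as in the proof of Lemma~\ref{CommutingLemma} — shows a cyclically reduced element of length $\geq 2$ cannot commute with an element of length $\leq 1$ unless that element lies in the appropriate factor conjugate, which after adjusting $h_t$ places everything in $O$. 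Care is needed in the degenerate case where $\varphi(t)$ is conjugate into $O \cap L$ for all $t$ (then one simply notes every $\varphi(s)$ has length $\leq 1$ already), and in choosing $t$ so that $\varphi(t) \neq e$, which is possible unless $\varphi$ is trivial, in which case the statement is vacuous.
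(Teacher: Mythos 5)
Your overall strategy (reduce to bounded length via Lemma~\ref{bounded}, using divisibility of $\R$ and Lemma~\ref{roots}) is the right one, and you correctly isolate the crux: each $\varphi(s)$ is individually conjugate into a factor, but the conjugator a priori depends on $s$. However, your resolution of that crux has a genuine gap. The step ``if $b:=h_t^{-1}\varphi(s)h_t$ had length $\geq 2$ it would be cyclically reduced (by Lemma~\ref{roots} plus divisibility)'' is backwards: Lemma~\ref{roots} says a divisible element is conjugate \emph{into a factor}, and a cyclically reduced element of length $\geq 2$ is never conjugate into a factor, so divisibility rules cyclic reducedness \emph{out} rather than forcing it. Divisible elements of length $\geq 2$ certainly exist (take $c\,d\,c^{-1}$ with $d$ divisible in $O$ and $c\notin O$), so you cannot conclude $l(b)\leq 1$. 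The commuting argument that is supposed to rescue this also fails in general: if $a=h_t^{-1}\varphi(t)h_t$ happens to lie in $O\cap L$ and is central in $\mathcal G$ (possible when the amalgam contains central elements), then commuting with $a$ imposes no restriction on $b$ at all, and Lemma~\ref{CommutingLemma} gives nothing. For the same reason the parenthetical in your degenerate case (``conjugate into $O\cap L$ implies length $\leq 1$'') is false: $c\,m\,c^{-1}$ with $m\in O\cap L$ can have length $2l(c)+1$.

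The paper sidesteps the simultaneous-conjugation problem entirely by choosing a dense subgroup of $\R$ that is \emph{finitely generated}: $H=\Z+\Z\sqrt2$. Writing $\varphi(m+n\sqrt2)=\varphi(1)^m\varphi(\sqrt2)^n$ and applying Lemma~\ref{roots} once to $\varphi(1)$ and once to $\varphi(\sqrt2)$ (each is divisible, hence of the form $c_i a c_i^{-1}$ with $a$ in a factor, so all of its powers have length at most $2l(c_i)+1$), one gets a \emph{uniform} bound on the length of every element of the subgroup $\varphi(H)$ without ever comparing the two conjugators. Lemma~\ref{bounded} then conjugates all of $\varphi(H)$ into one factor at once, and closedness of $O$ and $L$ together with continuity of $\varphi$ extends this to $\varphi(\overline H)=\varphi(\R)$. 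Your choice of the dense subgroup $\Q t$ is precisely what forces you into the unresolved simultaneous-conjugation issue, since $\Q t$ is not finitely generated and each generator $\varphi(t/n)$ would need its own conjugator with no uniform control. Replacing $\Q t$ by $\Z+\Z\sqrt2$ repairs your argument along the paper's lines.
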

\begin{proof}
Since $\varphi$ is a group homomorphism, we know that $\varphi(1)$ and $\varphi(\sqrt{2})$ have roots of all orders. Hence, we can use Lemma~\ref{roots} to conjugate both elements to $O$ or $L$. Consider the dense subgroup $H = \{ m+n\sqrt{2} : m,n \in \mathbb{Z}\}$ of $\mathbb{R}$. Since
$$l(\varphi(m+n\sqrt{2})) = l(\varphi(m)\varphi(n\sqrt{2})) \leq l(\varphi(1)^m\varphi(\sqrt{2})^n)$$
we conclude that $\varphi(H)$ have bounded length. Therefore, Lemma~\ref{bounded} implies that $\varphi(H)$ is conjugate to $O$ or $L$. Let $c \in O *_{O \cap L} L$ be an element such that $c \varphi(H) c^{-1}$ is contained in $O$ or $L$. Finally, as $O$ and $L$ are closed we get that $$c\varphi(\overline{H})c^{-1} = c\varphi(\mathbb{R})c^{-1} \subseteq \overline{c\varphi(H)c^{-1}}$$ 
is contained in $O$ or $L$.
\end{proof}

The key ingredient in the proof of Theorem~\ref{Lie} will rely on the following result which seems to be of independent interest. In the language 
of \cite{SilvaLeite} this means that every Lie group $G$ is uniformly finitely generated by one-parameter subgroups.

\begin{proposition}\label{main}
For any connected real Lie group $G$ there are finitely many elements $V_i \in \text{Lie}(G)$,  $i=1, 2, \ldots , N$ for which the product map of the one-parameter subgroups
$$\Phi_{V_1, V_2, \dots, V_N} : \R^N \to G$$
defined by
$$(t_1, t_2, \ldots , t_N) \mapsto \exp(t_1 V_1) \exp(t_2 V_2) \cdots \exp(t_N V_N)$$
is surjective.
\end{proposition}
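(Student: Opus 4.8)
The plan is to build up the finite family $V_1,\dots,V_N$ by a two-stage argument: first cover a neighbourhood of the identity, then cover all of $G$ using connectedness. For the local step, fix a basis $X_1,\dots,X_n$ of $\mathrm{Lie}(G)$ and consider the map $\psi(t_1,\dots,t_n)=\exp(t_1X_1)\cdots\exp(t_nX_n)$; its differential at the origin is the identity map $\mathbb{R}^n\to\mathrm{Lie}(G)$ (the standard ``coordinates of the second kind'' computation), so by the inverse function theorem $\psi$ is a diffeomorphism from a neighbourhood of $0$ onto a neighbourhood $U$ of $e\in G$. Thus with $N_0=n$ the product map $\Phi_{X_1,\dots,X_n}$ is already onto an open neighbourhood of $e$, in particular onto a symmetric open set $U$ with $U=U^{-1}$.

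For the global step I would use that a connected topological group is generated by any neighbourhood of the identity, so $G=\bigcup_{k\geq 1}U^k$. The subtlety is that a priori one needs arbitrarily long words $U^k$, whereas we want a \emph{fixed} number $N$ of one-parameter factors. To get uniformity I would invoke compactness after reducing to a compact exhaustion is not available in general, so instead the cleanest route is: since $\psi$ restricted to a slightly larger box is still a submersion (indeed a local diffeomorphism onto its image), the set $U$ can be taken to be any relatively compact symmetric neighbourhood of $e$ on which $\Phi_{X_1,\dots,X_n}$ is onto; then $U^2$ is again relatively compact, and by compactness of $\overline{U^2}$ there are finitely many points $g_1,\dots,g_r$ with $\overline{U^2}\subseteq \bigcup_j g_jU$. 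Each $g_j$ lies in some power $U^{k_j}$, hence $\overline{U^2}$, and therefore every $U^k$ by induction, lies in $U^K$ for a \emph{fixed} $K$ — wait, that is exactly the statement that $\bigcup_k U^k$ stabilizes, which forces $G=U^K$ since $G$ is connected and $U^K$ is open (hence clopen in the subgroup it generates, which is all of $G$). Concretely: $H:=\bigcup_k U^k$ is an open subgroup of the connected group $G$, hence $H=G$; the compactness argument shows the union is eventually constant, say $U^{K}=U^{K+1}$, whence $U^K$ is a subgroup, open, so $U^K=G$. Taking $N=Kn$ and the list $V_1,\dots,V_N$ to be the block $X_1,\dots,X_n$ repeated $K$ times gives $\Phi_{V_1,\dots,V_N}(\mathbb{R}^N)\supseteq (\Phi_{X_1,\dots,X_n}(\mathbb{R}^n))^K\supseteq U^K=G$, which is the claim.

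I would phrase the compactness step carefully since it is the one place the argument can go wrong: fix $U$ open, symmetric, relatively compact with $e\in U$ and $\Phi_{X_1,\dots,X_n}$ onto $U$. Then $\{U^k\}_{k\geq1}$ is an increasing chain of open sets; $\overline{U}$ is compact and covered by $\bigcup_k U^k$, so $\overline{U}\subseteq U^{k_0}$ for some $k_0$ (finite subcover plus monotonicity). Then $U^{k_0}$ is a neighbourhood of $\overline{U}\supseteq U$ with $U\cdot U^{k_0-1}\subseteq U^{k_0}$, and one checks $U^{k_0}\cdot U\subseteq U^{k_0}$, so $U^{k_0}$ absorbs multiplication by $U$ and hence $U^{k_0}=U^{m}$ for all $m\geq k_0$; being open and closed under the generating set it equals the subgroup $\langle U\rangle=G$. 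Set $K=k_0$.

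\textbf{Main obstacle.} The genuine content — and the step I expect to require the most care — is establishing \emph{uniformity}: passing from ``$G$ is generated by one-parameter subgroups'' (trivial, via $\exp$ of a basis and connectedness) to ``$G$ is generated in a \emph{bounded} number of steps''. This is where relative compactness of the neighbourhood $U$ and the clopen-subgroup argument for connected groups do the real work; without compactness one only gets $G=\bigcup_k U^k$ with no control on $k$. Everything else — the inverse function theorem computation that $\Phi_{X_1,\dots,X_n}$ is a local surjection, and the bookkeeping that concatenating $K$ copies of the basis list yields $\Phi_{V_1,\dots,V_N}$ with image $\supseteq U^K$ — is routine.
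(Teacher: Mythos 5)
Your local step is fine, but the global step contains a fatal error rather than merely a delicate point. You claim that for a relatively compact symmetric neighbourhood $U$ of the identity in a connected Lie group one has $U^K=G$ for some fixed $K$. This is false for every non-compact $G$: a finite product of relatively compact sets is relatively compact (indeed $U^K\subseteq(\overline{U})^K$, a continuous image of a compact set), so $U^K=G$ would force $G$ to be compact. Concretely, for $G=\R$ and $U=(-1,1)$ one has $U^k=(-k,k)$ and the chain never stabilizes; your assertion that $U^{k_0}\cdot U\subseteq U^{k_0}$ does not follow from $\overline{U}\subseteq U^{k_0}$ (in the example $U^{k_0}\cdot U=(-k_0-1,k_0+1)$), and covering $\overline{U^2}$ by finitely many translates $g_jU$ yields nothing beyond the trivial inclusion $U^2\subseteq U^{\max_j k_j+1}$, so the induction never closes. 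What your argument actually establishes is the proposition for compact $G$ only --- which is precisely Case 1 of the paper's proof.

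The error is unfixable within your framework because you only ever use a bounded piece of each one-parameter subgroup; any argument of that shape cannot escape a compact set. The content of the proposition for non-compact $G$ lies in exploiting the full, unbounded images $\exp(\R V_i)$. The paper does this by decomposing $G=S\cdot R=K\cdot A\cdot N\cdot R$ (Levi--Malcev plus Iwasawa), using your power-of-a-neighbourhood argument only for the compact factor $K$, and handling the remaining factors by exponential coordinates: surjectivity of $\exp$ on the abelian factor, a Malcev basis together with a Baker--Campbell--Hausdorff manipulation and induction on the lower central series for the nilpotent factor, and reduction of the solvable radical to the nilpotent case via $R/R'$. You would need to supply some version of these non-compact cases for your proof to go through.
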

\begin{proof}
By Levi-Malcev decomposition \cite{Levi} and Iwasawa decomposition \cite{Iwasawa} we can write
$$G = S \cdot R = K\cdot A\cdot  N\cdot  R\;,$$
where $S$ is semisimple, $R$ is solvable, $A$ is abelian, $N$ is nilpotent and $K$ is compact. 

If we can prove the claim of the proposition for each of the factors in the above decomposition we will be done.

\noindent For abelian groups the fact holds trivially.

\smallskip
\noindent {\bf Case 1:} $K$ a compact connected Lie group: Take any basis $(k_1,\dots , k_n)$ of the Lie algebra $\text{Lie} (K)$.
Then the product map $\Phi_{k_1, k_2, \ldots, k_n} : \R^n \to K$ is a submersion at the unit element.
Thus its image contains an open neighborhood $U$ of the unit element. Since the powers of a neighborhood
$U$ of the unit element in any connected Lie group cover the whole group, for a compact Lie group $K$ there is a finite number $m$ such that $U^m = K$. This means that for our purpose $\Phi_{k_1, k_2, \ldots, k_n}^m : \R^{nm} \to K$ is surjective.

\smallskip
\noindent {\bf Case 2:} Consider $N$, a nilpotent connected Lie group.
Then $N \cong  \tilde N /\Gamma$ for the universal covering $\tilde N$ and $\Gamma $ a normal discrete subgroup of $\tilde N$. Since the exponential map for  $\tilde N$ factors over $\pi : \tilde N \to N $ it is enough to prove 
the claim for simply connected $N$.

Then, the following fact (due to Malcev~\cite{Malcev}) is true:
If $N$ is simply connected then for a certain (Malcev) basis $(V_1, \dots, V_n)$ of $\text{Lie}(N)$ the map  $(t_1, t_2, \ldots , t_n) \mapsto \exp{t_1 V_1 + t_2 V_2 + \ldots + t_n V_n}$ is a diffeomorphism. 
We will now prove the claim by induction of the length of the lower central series of $\text{Lie} (N)$. For length $1$ the group is abelian and the fact holds trivially. 
Let $g = \exp (t_1 V_1 + t_2 V_2 + \ldots t_n V_n)$.
By  repeated use of Lemma \ref{Campbell}  we write
\begin{multline}
g = \exp (t_1 V_1) \exp (t_2 V_2 + \ldots +t_n V_n) \exp K_1 \\
= \exp (t_1 V_1) \exp (t_2 V_2)  \exp (t_3 V_3 + \ldots +t_n V_n) \exp K_2 \exp K_1 \\
= \exp (t_1 V_1) \exp (t_2 V_2) \ldots \exp (t_n V_n)  \exp K_n \ldots \exp K_2 \exp K_1 
\end{multline}
with  $K_i \in [\text{Lie} (N),\text{Lie} (N)]$.

Since $[\text{Lie} (N),\text{Lie} (N)]$ has shorter length of lower central series, by the  induction hypothesis each of the factors $\exp K_i$ is a product of one parameter subgroups. This proves the claim.

\smallskip
\noindent {\bf Case 3:} $R$ is solvable:  Let $R'$ denote denote the commutator subgroup of $R$. Then $R'$ is
nilpotent and $A:=R/ R'$ is abelian. If $x \in R$ is any element, we can per definition write its image $\bar x$ in $A$
as $\bar x = \exp (t_1 A_1) \cdots \exp (t_n A_n)$ for some $A_i$:s in $\text{Lie}(A)$ which form a basis. Let $\pi : \text{Lie} (R) \to \text{Lie}(A)$ denote the quotient map and let $\tilde A_i \in \text{Lie} (R)$ be elements with $\pi (\tilde A_i) = A_i$. Thus we get $x = \exp (t_1 \tilde A_1) \cdots \exp (t_n \tilde A_n) g$ for some
$g \in R'$. Since $R'$ is nilpotent this reduces our problem to case 2.

\end{proof}
\begin{lemma}\label{Campbell} For a nilpotent Lie group $G$ with Lie algebra 
$\mathfrak{g} =Lie (G)$ and $x, y \in \mathfrak{g}$ there is $K(x,y) \in [\mathfrak{g},\mathfrak{g}] $ with $$\exp (x+y) = \exp (x) \exp (y) \exp (K(x,y))$$

\end{lemma}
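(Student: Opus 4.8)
The statement is a terminating, finite-dimensional instance of the Baker--Campbell--Hausdorff (BCH) formula, and I would prove it along exactly those lines.

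\emph{The terminating BCH formula.} For $u,v$ in a Lie algebra write $Z(u,v)=u+v+\tfrac12[u,v]+\cdots$ for the BCH series. Since $\mathfrak g$ is nilpotent, every iterated bracket longer than the nilpotency class vanishes, so $Z(u,v)$ is a \emph{finite} Lie polynomial in $u,v$, hence a polynomial map $\mathfrak g\times\mathfrak g\to\mathfrak g$; and, as every homogeneous term of $Z(u,v)$ other than $u+v$ is an iterated bracket, it satisfies
$$Z(u,v)\equiv u+v\pmod{[\mathfrak g,\mathfrak g]}.$$
I would then record the group identity $\exp(u)\exp(v)=\exp\bigl(Z(u,v)\bigr)$, valid in all of $G$. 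For simply connected nilpotent $G$ this is classical ($\exp$ being a global diffeomorphism); the general case I would obtain by lifting to the universal covering group $\widetilde G$ --- again nilpotent and simply connected --- proving the identity there, and pushing it down along the covering homomorphism $p\colon\widetilde G\to G$, which intertwines the exponential maps.

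\emph{Exhibiting $K(x,y)$ and finishing.} I would then set $K(x,y):=Z\bigl(-Z(x,y),\,x+y\bigr)\in\mathfrak g$. Since $\exp\bigl(-Z(x,y)\bigr)=\bigl(\exp(x)\exp(y)\bigr)^{-1}$, two applications of the BCH identity give
$$\exp\bigl(K(x,y)\bigr)=\exp\bigl(-Z(x,y)\bigr)\exp(x+y)=\bigl(\exp(x)\exp(y)\bigr)^{-1}\exp(x+y),$$
that is, $\exp(x+y)=\exp(x)\exp(y)\exp\bigl(K(x,y)\bigr)$, the asserted relation. To see that $K(x,y)\in[\mathfrak g,\mathfrak g]$, reduce modulo $[\mathfrak g,\mathfrak g]$ and use the congruence above twice: $K(x,y)\equiv -Z(x,y)+(x+y)\equiv -(x+y)+(x+y)=0$.

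The argument is purely formal once BCH is available, so there is no serious obstacle; the only point that needs care is the \emph{global} validity of $\exp(u)\exp(v)=\exp\bigl(Z(u,v)\bigr)$ on a nilpotent group that need not be simply connected, which is precisely what the passage to the universal cover in the first step is for.
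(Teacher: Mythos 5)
Your proof is correct and follows essentially the same route as the paper: invoke the terminating Baker--Campbell--Hausdorff formula for nilpotent $\mathfrak g$, set $K(x,y):=Z(-Z(x,y),x+y)$, and observe that the bracket-free terms cancel so that $K(x,y)\in[\mathfrak g,\mathfrak g]$. Your extra care about global validity of $\exp(u)\exp(v)=\exp(Z(u,v))$ on non-simply-connected nilpotent groups is a reasonable refinement the paper elides, but it does not change the argument.
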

\begin{proof}
The key fact is the Baker-Campbell-Hausdorff formula proven by Dynkin in \cite{Dynkin}. In the nilpotent case it says that the is a finite sum of iterated Lie brackets $Z (x,y)$ (number of iterations of brackets bounded by the lower central series of $\mathfrak{g}$) such that for all $x, y\in \mathfrak{g}$
$$\exp (x) \exp (y) = \exp Z(x,y).$$
Moreover $Z(x, y) = x + y + [x, y] + \mathrm{ higher \  brackets}$.
Now 
\begin{multline}
    \exp (x+y) = \exp (x) \exp (y) \exp (-Z(x,y) \exp (x+y)) \\ 
    =\exp (x) \exp (y) \exp (Z(-Z(x,y), x+y))
\end{multline}

Setting $K(x,y) := Z(-Z(x,y), x+y))$ finishes the proof, since
the terms without bracket cancel, i.e., $K(x,y) \in [\mathfrak{g},\mathfrak{g}]$. 
\end{proof}

\noindent Now we are ready to prove the main result of this section.

\begin{proof}[Proof of Theorem~\ref{Lie}]
Let $G_0$ denote a connected component of $G$ containing the identity. By Proposition~\ref{main}, there are finitely many one-parameter subgroups $\R_i$ such that the product map $\R_1 \times  \R_2 \times \cdots \times \R_N \to G_0$ is surjective. By Proposition~\ref{oneparameter} and Lemma~\ref{bounded}, the elements of each of the $\varphi (\R_i)$ have bounded length, say
$a(i)$. Thus the length of the elements in $\varphi (G_0)$ is bounded by $\sum_{i=1}^N a(i)$. As
$G$ has only finitely many connected components the lengths of elements of $\varphi (G)$
are bounded. The assertion now follows from Lemma~\ref{bounded}.
\end{proof}
\section{Classification of Lie group actions by overshears}
\label{Classification}

In this section we prove Theorem \ref{action} from the introduction. We  assume $\text{deg} (p) \ge 4$ and use Theorem \ref{StructureTheorem} from the introduction stating that $\text{OS}(D_p)$ is a free amalgamated product $O_1 * O_2$, where $O_1$ is generated by $O_{f,g}^x$ and $O_2$ is generated by $IO_{f,g}^xI$. By Theorem~\ref{Liesubgroup} we can conjugate any Lie group $G$ with finitely many components acting continuously on $D_p$ by elements of $\text{OS}(D_p)$  into $O_1$ or $O_2$. Without loss of generality
we can assume that we can conjugate any connected Lie subgroup $G$ of $\text{OS}(D_p)$, in particular any one-parameter subgroup, to $O_1$.
Now we have reduced our problem to classify Lie subgroups of $O_1$. We start 
with one-parameter subgroups.


We recall the definitions of overshear fields and shear fields from \cite{KutzschebauchLind}.
\begin{itemize}
\item [$(V1)$] $OF_{f,g}^x :=  p'(z)(zf(x)+g(x))\frac{\partial}{\partial y} + x(zf(x)+g(x))\frac{\partial}{\partial z}$

\item [$(V2)$] $SF_f^x := p^\prime(z)f(x)\frac{\partial}{\partial y} + xf(x)\frac{\partial}{\partial z}$

\end{itemize}
where $f,g$ are entire functions on $\C$. In the special case  $f \equiv 0$ then $OF_{f,g}^x$ is the shear field $SF_g^x$. 

The set of overshear fields is a Lie algebra which consists of complete vector fields only. The formula for the bracket is given by equation~\ref{OvershearGivesShear}.

Any one-parameter subgroup of $\Aut (D_p)$ which is contained in the overshear group $O_1$ is the flow of an overshear field. Let us prove this. The connection between a vector field
$V(x,y,z)$ and the flow $\varphi (x,y,z,t)$ is given by the ODE
\begin{equation}
\frac {d} {dt}\vert_{t=t_0} \varphi (x,y,z,t) = V (\varphi (x,y,z,t_0)), \quad \varphi (x,y,z,0)=(x,y,z))
    \label{flow}
\end{equation} 
Since any action of a real Lie group on a complex space by holomorphic automorphisms is real analytic \cite[1.6]{Akhiezer} we can write the flow
$\varphi (x,y,z,t)= (x, \ldots , z \exp (x f(t,x)) + x g(t,x))$ contained in $O_1$ as 
$$\left( x, \ldots , z \exp( x \sum_{i=0}^\infty  f_i(x) t^i)  + x \sum_{i=0}^\infty g_i (x) t^i \right)$$ for 
entire functions $f_i$ and $g_i$. Using equation~\ref{flow} for $t_0=0$ leads to
$V(x,y,z,t) = p'(z)(z f_0(x)+g_1(x))\frac{\partial}{\partial y}+\{x f_1(x) \exp (x f_0 (x)) z + x g_1 (x) \}\frac{\partial}{\partial z}$, an overshear field.

Calculating the commutator we find that for any $f,g,h$ and $k$, entire functions on $\C$, we have
\begin{equation}\label{OvershearGivesShear}
    [OF_{f,g}^x, OF_{h,k}^x] = x \cdot SF_{gh-kf}\end{equation}
In particular shear fields commute and 
\begin{equation}\label{ShearAndOvershear}
[SF_h^x,OF_{f,g}^x] = x\cdot SF_{fh}^x = xf(x)\cdot SF_h^x.
\end{equation}
\begin{proposition}\label{Infinite}
Let $f,g$ and $h$ be fixed holomorphic functions with $f,h \not\equiv 0$. Then the Lie algebra ${\text Lie}(OF_{f,g}^x,SF_h^x)$ generated by $OF_{f,g}^x$ and $SF_h^x$ is of infinite dimension.
\end{proposition}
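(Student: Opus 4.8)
The plan is to produce an explicit infinite linearly independent family inside the Lie algebra generated by $OF_{f,g}^x$ and $SF_h^x$ by iterated bracketing. Set $W_0 := SF_h^x$ and inductively $W_{n+1} := [W_n, OF_{f,g}^x]$, so that each $W_n$ lies in $\text{Lie}(OF_{f,g}^x, SF_h^x)$. First I would apply formula \ref{ShearAndOvershear}, which gives $[SF_h^x, OF_{f,g}^x] = xf(x)\cdot SF_h^x$, equivalently $SF_{xfh}^x$, and in which $g$ plays no role; since the bracket of a shear field with $OF_{f,g}^x$ is again a shear field of the same form, the same computation applies at each step and an easy induction yields $W_n = SF_{(xf)^n h}^x$ for all $n \ge 0$.

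It then remains to verify that $\{SF_{(xf)^n h}^x : n \ge 0\}$ is linearly independent over $\C$. Because the map $k \mapsto SF_k^x$ is linear and injective — its $\partial/\partial z$-component $xk(x)$ already recovers $k$ — this is equivalent to linear independence of the entire functions $\{(xf)^n h : n \ge 0\}$. Here I would use that $f \not\equiv 0$ makes $x \mapsto xf(x)$ a non-constant entire function (it vanishes at $0$ yet is not identically zero), so its image is an infinite set; hence any finite relation $\sum_n c_n (xf)^n h \equiv 0$, divided by $h$ on the dense complement of its isolated zero set, produces a polynomial in the variable $xf(x)$ vanishing on an infinite set, forcing all $c_n = 0$. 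This exhibits an infinite linearly independent subset of $\text{Lie}(OF_{f,g}^x, SF_h^x)$, which is therefore infinite-dimensional.

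I do not expect a genuine obstacle. The two points requiring a little care are: that the iterated brackets really remain inside the Lie algebra generated by the two given fields (clear, since each step brackets a shear field with $OF_{f,g}^x$ and stays among shear fields $SF_{(\cdot)}^x$), and the elementary observation that $xf(x)$ is non-constant as soon as $f \not\equiv 0$, which is exactly what drives the linear-independence argument.
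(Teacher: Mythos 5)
Your proof is correct and follows essentially the same route as the paper: both rely on formula (\ref{ShearAndOvershear}) to show that iterated brackets produce the shear fields $SF_{(xf)^nh}^x$, and then establish linear independence of the functions $(xf)^nh$. The only (immaterial) difference is at the last step, where you invoke that a nonzero polynomial cannot vanish on the infinite image of the non-constant entire function $xf(x)$, while the paper compares orders of vanishing at $x=0$.
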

\begin{proof}
By expression (\ref{ShearAndOvershear}), and the fact that shear fields commute, we get that
$$\text{Lie}(OF_{f,g}^x,SF_h^x) = \text{span}\{OF_{f,g}^x,SF_{x^nf^nh}; n=0,1,2\dots\}$$
Assume that the Lie algebra is of finite dimension. This means that there is an $n$ and there are constants $a_0,\dots,a_n,b$ such that
$$bOF_{f,g}^x + \sum_{j=0}^n a_jx^jf^j(x)SF_h^x = x^{n+1}f^{n+1}(x)SF_h^x.$$
It follows that $b=0$, whence we get a functional equation of the form
$$\sum_{j=0}^n a_jy^j(x) = y^{n+1}(x)\;,$$
where $y$ is holomorphic and has a zero at $x=0$. This is impossible for non-zero functions $y$, since the right hand side has a higher order of vanishing at $x=0$ than the left hand side.
\end{proof}

\begin{proposition}\label{Abelian}
Let $\mathfrak{g}$ be a Lie algebra contained in $\text{OS}_1$  and suppose that $ {\text dim}(\mathfrak{g}) < +\infty$. Then $\mathfrak{g}$ is abelian.
\end{proposition}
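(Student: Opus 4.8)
The plan is to argue by contradiction, using the fact that the bracket of any two overshear fields is a shear field (equation~\eqref{OvershearGivesShear}) together with Proposition~\ref{Infinite}. So suppose $\mathfrak{g}$ is a finite-dimensional subalgebra of the Lie algebra $\text{OS}_1$ of overshear fields which is \emph{not} abelian, i.e. $[\mathfrak{g},\mathfrak{g}] \neq 0$; I will derive a contradiction.

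First I would record two consequences of the commutator formulas. By \eqref{OvershearGivesShear}, for any $OF_{f,g}^x, OF_{h,k}^x \in \mathfrak{g}$ the bracket $[OF_{f,g}^x,OF_{h,k}^x] = x\cdot SF_{gh-kf}^x$ is a shear field; hence the derived algebra $[\mathfrak{g},\mathfrak{g}]$ is contained in the (abelian) subspace of shear fields. Since $[\mathfrak{g},\mathfrak{g}]\neq 0$, it therefore contains a nonzero shear field, that is, some $SF_h^x\in\mathfrak{g}$ with $h\not\equiv 0$. On the other hand, $\mathfrak{g}$ cannot consist of shear fields only: shear fields commute (the case $f\equiv 0$ of \eqref{ShearAndOvershear}), so that would force $[\mathfrak{g},\mathfrak{g}]=0$, contrary to assumption. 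Hence $\mathfrak{g}$ also contains some genuine overshear field $OF_{f,g}^x$ with $f\not\equiv 0$.

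Now both $OF_{f,g}^x$ (with $f\not\equiv 0$) and $SF_h^x$ (with $h\not\equiv 0$) lie in $\mathfrak{g}$, and $\mathfrak{g}$ is closed under the bracket, so the Lie subalgebra $\text{Lie}(OF_{f,g}^x,SF_h^x)$ generated by these two elements is contained in $\mathfrak{g}$. But Proposition~\ref{Infinite} asserts that $\text{Lie}(OF_{f,g}^x,SF_h^x)$ is infinite-dimensional, contradicting $\dim(\mathfrak{g})<+\infty$. Therefore $\mathfrak{g}$ must be abelian. Since all the needed ingredients are already available, I expect no real obstacle here; the only point that wants a moment of care is the dichotomy in the middle step — that a non-abelian $\mathfrak{g}$ is forced simultaneously to contain a true overshear field and a nonzero shear field — which, as indicated, follows at once from the observation that every bracket of elements of $\mathfrak{g}$ is a shear field.
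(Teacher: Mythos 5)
Your proof is correct and follows essentially the same route as the paper: assume $\mathfrak{g}$ is non-abelian, use equation (\ref{OvershearGivesShear}) to produce a non-trivial shear field in $[\mathfrak{g},\mathfrak{g}]$ together with a genuine overshear field ($f\not\equiv 0$) in $\mathfrak{g}$, and conclude by Proposition \ref{Infinite}. In fact you spell out more carefully than the paper does why $\mathfrak{g}$ must contain an overshear field with $f\not\equiv 0$ (the paper leaves this implicit in the phrase ``since they do not commute''), so nothing is missing.
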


\begin{proof}
Assume that $\mathfrak{g}$ is not abelian. Let $\Theta_1, \Theta_2 \in \mathfrak{g}$ be two non-commuting  vector fields. As explained above they are 
overshear fields and since they  do not commute their bracket $[\Theta_1, \Theta_2]$
is by equation~\ref{OvershearGivesShear} a non-trivial shear field. Now the result follows from Proposition \ref{Infinite}.
\end{proof}

\begin{proof} (of Theorem \ref{action})
As explained in the beginning of the section, the action of $G$ on $D_p$ by overshears  can  be conjugated into $O_1$. The action of $G$ by elements of $O_1$ gives rise to a Lie algebra homomorphism of $\text{Lie} (G)$ into the Lie algebra of vector fields on $D_p$ fixing the variable $x$. This Lie algebra is exactly the set of overshear vector fields $OF^x_{f,g}$ (which consists of complete fields only). By Proposition \ref{Abelian} the finite dimensional Lie algebra  $\text{Lie} (G)$ has to be abelian. 
Since all one-parameter subgroups of $G$ give rise to an overshear vector field, they are isomorphic to $(\R,+)$ (not $S^1$). Thus $G$ is isomorphic to the additive group  $\R^n$ generated by the flows of $n$ linear independent commuting
overshear vector fields $OF^x_{f_i, g_i}, i= 1, 2, \ldots, n$ which commute. By formula \ref{OvershearGivesShear} this is equivalent to $f_i g_j - f_j g_i = 0$ $\forall i, j$. An equivalent  way of expressing this is that the meromorphic functions
$h_i := \frac {g_i }{f_i} $ are the same for all $i$  or that  all $f_i$ are identically zero. 

\end{proof} 

\begin{corollary}\label{action:formula} Suppose $\deg (p) \ge 4$. Every one-parameter subgroup of $\text{OS} (D_p)$ is conjugate by elements of $\text{OS}(D_p)$ to the flow of an overshear field $OF^x_{f,g}$ which in turn is given by the formula
\begin{multline*} 
 (x,y,z,t) \mapsto \\ \left(x, y + \frac{p\left(e^{x f(x) t} z + \frac{g(x)}{f(x)} (e^{x f(x) t} -1)\right)-p(z) } {x}, e^{x f(x) t} z + \frac{g(x)}{f(x)} (e^{x f(x) t} -1) \right).
 \end{multline*}
 Here the expression $\frac{e^{ab}-1}{a}$ for $a=0$ is interpreted as the limit of this expression for $a\to 0$,i .e., as $b$.
\end{corollary}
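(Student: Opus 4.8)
The plan is to combine Theorem~\ref{Liesubgroup} with the explicit integration of an overshear field; essentially all the substance is already in place, so what remains is an elementary ODE computation. First, by Theorem~\ref{Liesubgroup} applied to a continuous one-parameter subgroup $\varphi\colon\R\to\text{OS}(D_p)=O_1*_{O_1\cap O_2}O_2$, the image $\varphi(\R)$ is conjugate, within $\text{OS}(D_p)$, into $O_1$ or into $O_2$; as in the proof of Theorem~\ref{action}, since the two cases are interchanged by the involution $I$ we may assume $\varphi(\R)\subseteq O_1$. As established in the discussion preceding Proposition~\ref{Infinite} (using real analyticity of the action, \cite[1.6]{Akhiezer}), every one-parameter subgroup of $O_1$ is the flow of an overshear field $OF_{f,g}^x$. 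Hence it remains to solve the flow equation~(\ref{flow}) for $V=OF_{f,g}^x=p'(z)(zf(x)+g(x))\partial_y+x(zf(x)+g(x))\partial_z$.

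I would integrate coordinate by coordinate. The first coordinate is constant, $x(t)\equiv x$, so $x$ only enters as a parameter. The third coordinate satisfies the scalar linear equation $\dot z=xf(x)\,z+xg(x)$ with $t$-independent coefficients, so with $z(0)=z$ one gets $z(t)=e^{xf(x)t}z+\frac{g(x)}{f(x)}\bigl(e^{xf(x)t}-1\bigr)$, read where $f(x)=0$ via the limiting convention of the statement as $z+xg(x)t$. For the second coordinate one notes $z(t)f(x)+g(x)=\frac{1}{x}\dot z(t)$, whence $\dot y=p'(z(t))\bigl(z(t)f(x)+g(x)\bigr)=\frac{1}{x}\frac{d}{dt}p(z(t))$, and therefore $y(t)=y+\frac{1}{x}\bigl(p(z(t))-p(z)\bigr)$. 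Substituting the expression for $z(t)$ reproduces exactly the displayed map, and the identity $x\,y(t)-p(z(t))=x\,y-p(z)=0$ re-confirms that the flow stays on $D_p$.

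The only points needing attention are the removable singularities, i.e.\ checking that the resulting map is holomorphic on all of $D_p$ and really lies in $O_1$. Writing $\frac{g(x)}{f(x)}\bigl(e^{xf(x)t}-1\bigr)=g(x)\cdot\frac{e^{f(x)(xt)}-1}{f(x)}=g(x)\,h(f(x),xt)$ with $h(a,b)=\frac{e^{ab}-1}{a}$ entire and $h(0,b)=b$ shows the third coordinate is entire in $(x,z,t)$ and has the form $z\,e^{x\tilde f(t,x)}+x\,\tilde g(t,x)$ with $\tilde f(t,x)=f(x)t$ and $\tilde g(t,x)=g(x)\,\frac{e^{xf(x)t}-1}{xf(x)}$ entire in $x$; so the time-$t$ map is the overshear map $O_{\tilde f(t,\cdot),\tilde g(t,\cdot)}\in O_1$. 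The prefactor $\frac{1}{x}$ in the second coordinate is harmless since $p(z(t))-p(z)$ vanishes on $\{x=0\}$, and completeness of $OF_{f,g}^x$ makes the flow global in $t$. I do not expect a genuine obstacle here: all the real content sits in Theorem~\ref{Liesubgroup} and in the identification of one-parameter subgroups of $O_1$ with overshear-field flows; the remaining work is the linear ODE above, the only delicate matter being the bookkeeping of the $f(x)=0$ and $x=0$ limits.
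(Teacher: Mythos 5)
Your proposal is correct and follows essentially the same route as the paper: conjugate the one-parameter subgroup into $O_1$ via Theorem~\ref{Liesubgroup} (Theorem~\ref{action}), identify it with the flow of an overshear field using the real-analyticity argument preceding Proposition~\ref{Infinite}, and integrate the linear ODE for the $z$-coordinate, with $y(t)=y+\frac{1}{x}\bigl(p(z(t))-p(z)\bigr)$ following from $\dot y=\frac{1}{x}\frac{d}{dt}p(z(t))$. The paper leaves this integration implicit; your explicit treatment of the $f(x)=0$ and $x=0$ removable singularities is a correct and welcome filling-in of those details.
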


\begin{remark} It is directly seen from Theorem \ref{action}  that any action of a real  Lie group $G$ on $D_p$  extends to a holomorphic action of  the universal complexification $G^\C$, which in our case has  just the additive group $\C^n$ as connected component. This is a general fact proven by the first author in \cite{Kutzschebauch}. 
\end{remark}

\section{Examples of automorphisms of $D_p$ not contained in $\text{OS} (D_p)$}\label{Examples}

In~\cite{AndristKutzschebauchLind} it is shown that the overshear group is a proper subset of the automorphism group. In fact,  using Nevanlinna theory, there it  is shown that the hyperbolic mapping
$$(x,y,z) \mapsto (xe^z, ye^{-z}, z)$$
is not contained in the overshear group. This is analogous to the result by Anders\'en,~\cite{Andersen}, who showed that the automorphism of  $\C^2$ defined by
$$(x,y) \mapsto (xe^{xy}, ye^{-xy})$$
is not a finite compositions of shears. Hence the shear group is a proper subgroup of the group  of volume-preserving automorphisms. For another proof of this fact see also \cite{KutzschebauchKraft}. 
Note that our Classification Theorem \ref{action}  immediately implies that the elements of the $\C^*$-action $\lambda \mapsto (\lambda x, \lambda^{-1} y, z)$ can not all be contained in $\text{OS} (D_p)$, since there are no $S^1$-actions 
in $\text{OS} (D_p)$.

We will present yet another way of finding an automorphism of a Danielewski surface which is not a composition of overshears.

\begin{theorem}\label{HyperbolicAut}
Assume that $\deg(p) \geq 4$. Then, the overshear group $\text{OS}(D_p)$ is a proper subset of the component of the identity of $\text{Aut}_\text{hol}(D_p)$.
\end{theorem}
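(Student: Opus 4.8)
The plan is to exhibit an explicit automorphism of $D_p$ which lies in the identity component of $\Aut_{\text{hol}}(D_p)$ but cannot lie in $\text{OS}(D_p)$, and to detect this using the classification machinery already developed rather than Nevanlinna theory. The natural candidate is the time-one map of a complete holomorphic vector field on $D_p$ that is \emph{not} an overshear field in either slot, for instance a field coming from the $\C^*$-fibration structure. By Leuenberger's classification of complete algebraic vector fields on $D_p$ (referenced in the introduction via Remark \ref{Cstarfibration}), there exist complete fields whose flows are one-parameter subgroups of $\Aut_{\text{hol}}(D_p)$ lying in the identity component; any such flow gives a one-parameter subgroup $\{\varphi_t\}_{t\in\R}$ of the identity component.

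First I would argue by contradiction: suppose $\text{OS}(D_p)$ equals the whole identity component. Then every such one-parameter subgroup $\{\varphi_t\}$ would be a continuous one-parameter subgroup of the free amalgamated product $O_1 *_{O_1\cap O_2} O_2$ (using Theorem \ref{StructureTheorem}, valid since $\deg p\ge 4$). By Proposition \ref{oneparameter} it would be conjugate, inside $\text{OS}(D_p)$, into $O_1$ or $O_2$. Conjugating, we may assume it lies in $O_1$, so by the analysis in Section \ref{Classification} (equation \ref{flow} and the paragraph following it) the generating field is an overshear field $OF^x_{f,g}$ — or, after applying the involution $I$, an overshear field in the $y$-slot. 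The contradiction is then obtained by choosing the initial complete field to be one that Leuenberger's list shows is genuinely of a different type (e.g. a field tangent to the $\C^*$-fibration whose general orbit is a full fiber $\C^*$ rather than a copy of $\C$), so that it is conjugate in $\Aut_{\text{hol}}$ to neither an $x$-overshear field nor a $y$-overshear field; in particular its flow contains $S^1$-like or hyperbolic behaviour that Theorem \ref{action} forbids inside $\text{OS}(D_p)$.

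A cleaner route, avoiding any delicate invariant of vector fields, is to reuse the observation already made in the text: the $\C^*$-action $\lambda\mapsto(\lambda x,\lambda^{-1}y,z)$ on $D_p$ restricts on the unit circle $|\lambda|=1$ to an $S^1$-action. Each element of this $S^1$ lies in the identity component of $\Aut_{\text{hol}}(D_p)$ (connect $\lambda$ to $1$ along the circle). If all of them lay in $\text{OS}(D_p)$, then this circle would be a compact connected subgroup — indeed a Lie subgroup isomorphic to $S^1$ — of the amalgamated product $O_1 * O_2$, so by Theorem \ref{Liesubgroup} it would be conjugate into $O_1$ or $O_2$, and then Theorem \ref{action} (or directly the argument in its proof showing every one-parameter subgroup of $O_1$ is the flow of an overshear field, hence isomorphic to $(\R,+)$ and never to $S^1$) gives a contradiction. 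Either way, $\text{OS}(D_p)$ is a proper subset of the identity component.

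The main obstacle is producing a \emph{single honest element} of the identity component outside $\text{OS}(D_p)$, as opposed to ruling out a whole subgroup: the $S^1$-argument disposes of the subgroup at once, so I would then pick any $\varphi\in S^1\setminus\text{OS}(D_p)$ — such an element must exist by the preceding paragraph — and note that $\varphi$ lies in the identity component because $S^1$ is connected and contains the identity. One should also check the membership of these candidates in the identity component of $\Aut_{\text{hol}}(D_p)$ (rather than merely $\Aut_{\text{alg}}$); this follows since each is joined to the identity by a real-analytic path of automorphisms, namely the flow or the circle itself. I expect the bookkeeping around "identity component of the holomorphic automorphism group" versus the algebraic or overshear groups to be the only subtle point; the group-theoretic heart of the argument is immediate from Theorems \ref{StructureTheorem}, \ref{Liesubgroup} and \ref{action}.
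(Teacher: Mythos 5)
Your second argument --- restricting the hyperbolic $\C^*$-action $\lambda\mapsto(\lambda x,\lambda^{-1}y,z)$ to the circle $|\lambda|=1$ --- is correct and complete, and it is in fact the observation the paper records just before Theorem \ref{HyperbolicAut} as an immediate consequence of Theorem \ref{action}: every one-parameter subgroup of $O_1$ is the flow of an overshear field $OF^x_{f,g}$, hence isomorphic to $(\R,+)$ and never periodic, so there are no nontrivial $S^1$-actions inside $\text{OS}(D_p)$; consequently some rotation $(\lambda x,\lambda^{-1}y,z)$ lies outside $\text{OS}(D_p)$ while visibly belonging to the identity component. The paper's official proof of the theorem takes a different tack: it starts from an arbitrary complete algebraic vector field $\theta$ with adapted $\C^*$-fibration, conjugates its flow into $O_1$ exactly as you propose, and then derives the contradiction from the fact that the generic $\C^*$-orbits of an overshear field $OF^x_{f,g}$ with $f\neq 0$ are \emph{not closed} in $D_p$ (they contain a fixed point in their closure), whereas the orbits of $\theta$ are closed $\C^*$-fibers. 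That version buys a much larger supply of explicit non-overshear automorphisms, e.g.\ suitable time-$t$ maps of $(x,y,z)\mapsto(xe^{f(z)t},ye^{-f(z)t},z)$ for any nonzero entire $f$. One caution about your first route: the invariant you suggest --- generic orbit $\C^*$ versus a copy of $\C$ --- does not by itself separate the two cases, since $OF^x_{f,g}$ with $f\not\equiv 0$ also has generic orbits isomorphic to $\C^*$; it is the closedness of those orbits that does the work. Since your $S^1$ argument stands on its own, however, the theorem is proved.
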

\begin{proof}
We look at complete algebraic vector fields on Danielewski surfaces. These are algebraic vectorfields which are globally integrable, however their flow maps are merely holomorphic maps. As shown in \cite{KalimanKutzschebauchLeuenberger}
there is always a $\C$- or a $\C^*$-fibration adapted to these vector fields. That is, there is a map $\pi: D_p \to \C$ such that the flow of the complete field $\theta$ maps fibers of $\pi$ to fibers of $\pi$. These maps $\pi$ have general fibre $\C$
or $\C^*$. In case of at least two exceptional fibers the vector field $\theta$ has to preserve each fibre, i.e., it is tangential to the fibers of $\pi$. For example the overshear fields in $\text{OS}_1$ have adapted fibration $\pi_0 : (x,y,z) \mapsto x$. They are tangential to this  $\C$-fibration, the fibres outside $x=0$ are parametrized  by  $z \in \C$ via $\displaystyle z \mapsto \left(x, \frac {p(z)}{x}, z\right) $. The exceptional fibre is $\pi_0^{-1} (0)$ consisting of $\deg (p)$ copies of $\C$, one for each zero $z_i$ of the polynomial $p$ and parametrized by $y \in \C$ via $y \mapsto (0, y, z_i)$. A typical example of a field with adapted $\C^*$-fibration is the hyperbolic field $x \frac{ \partial} {\partial x} -y \frac{ \partial} {\partial y}$ with adapted fibration $(x,y,z) \mapsto z$. There are $\deg (p)$ exceptional fibers at the zeros of the polynomial $p$, each of them isomorphic to the cross of axis $x y =0$. The same $\C^*$-fibration is adapted to the field $\displaystyle f(z) \left(x \frac{ \partial} {\partial x} -y \frac{ \partial} {\partial y}\right)$ for a nontrivial polynomial $f$. 

Now take any complete algebraic vector  field $\theta$  with an adapted $\C^*$-fibration (and thus generic orbits $\C^*$). Assume that the flow maps (or time-$t$ maps) $\varphi_t \in {\rm Aut}_{hol} (D_p)$  of $\theta$ are all contained in the overshear group $\text{OS} (D_p)$.
Then by Theorem \ref{action} this one-parameter subgroup $t \mapsto \varphi_t$ can be conjugated into $O_1$. This would mean that the one-parameter subgroup would be conjugate to a one-parameter subgroup of an overshear field $OF^x_{f,g}$ (since these are all complete fields respecting the fibration x). This would imply that the generic orbit of the  overshear field is $\C^*$, which is equivalent to $f\ne 0$. However, the  generic orbits of these fields   $OF_{f,g}$ (isomorphic to $\C^*$) are not closed in $D_p$, they contain a fixed point in their closure. Thus our assumption that all $\varphi_t$ are contained in $\text{OS}(D_p)$ leads to a contradiction. In particular we have shown that for any non-zero entire function $f$ there is a $t \in \R$ such that the time $t$-map  of the hyperbolic field given by $$ (x,y,z) \mapsto (xe^{f(z)t},ye^{-f(z)t},z)$$ is not contained in $\text{OS}(D_p)$. 

\end{proof}

\begin{remark} \label{Cstarfibration} More examples of complete algebraic vector fields on $D_p$  with adapted $\C^*$-fibration can be found in the work of Leuenberger \cite{Leuenberger} who up to automorphism classifies all complete algebraic vector fields 
on Danielewski surfaces. Interesting examples (whose  flow maps are not algebraic) are fields whose adapted $\C^*$-fibration is given by  $(x,y,z) \mapsto x^m(x^l (z +a)+Q(x))^n$ for coprime numbers $m, n \in \N$, 
$a\in \C$ and $0 \leq l < \deg(Q)$. The exact formula for these fields can be found in the Main Theorem of loc.cit.
\end{remark}

\begin{remark} 
Without specifying a concrete automorphism which is  not in the group generated by overshears, Anders\'en and Lempert use  an abstract Baire category argument in~\cite{AndersenLempert} to show  that the group generated by overshears 
in $\C^n$ is a proper subroup of the group of holomorphic automorphisms $ \rm{Aut}_{hol} (\C^n)$ of $\C^n$.   We do believe that such a proof could work in the case of Danielewski surfaces as well.
\end{remark}

\def\listing #1#2#3{{\sc #1}:\ {\it #2},\ #3.}

\end{document}